\newcommand{\R}{\mathbb{R}}
\newcommand{\E}{\mathbb{E}}
\newcommand{\N}{\mathbb{N}}
\newcommand{\Z}{\mathbb{Z}}
\newcommand{\eee}{{\rm e}}
\newcommand{\eqdistr}{\stackrel{d}{=}}
\newcommand{\todistr}{\Longrightarrow}
\newcommand{\1}{\mathbbm{1}}
\newcommand{\be}{\begin{equation}}
\newcommand{\ee}{\end{equation}}
\newcommand{\HG}{\operatorname{HyperG}\,}
\newcommand{\Bin}{\operatorname{Bin}\,}
\newcommand{\BETA}{\operatorname{Beta}\,}
\newcommand{\Normal}{\mathcal N}
\theoremstyle{plain}
\newtheorem{theorem}{Theorem}[section]
\newtheorem{lemma}[theorem]{Lemma}
\newtheorem{corollary}[theorem]{Corollary}
\newtheorem{proposition}[theorem]{Proposition}
\theoremstyle{definition}
\theoremstyle{remark}
\newtheorem{remark}[theorem]{Remark}
\begin{document}
\title{When does the chaos in the Curie-Weiss model stop to propagate?}

\begin{abstract}
We investigate increasing propagation of chaos for the mean-field Ising model of ferromagnetism (also known as the Curie-Weiss model) with $N$ spins at inverse temperature $\beta>0$ and subject to an external magnetic field of strength $h\in\mathbb{R}$.
Using a different proof technique than in \cite{BAZ_chaos} we confirm the well-known propagation of chaos phenomenon: 
If $k=k(N)=o(N)$ as $N\to\infty$, then the $k$'th marginal distribution of the Gibbs measure converges to a product measure at $\beta <1$ or $h \neq 0$ and to a mixture of two product measures, if $\beta >1$ and $h =0$.
More importantly, we also show that if $k(N)/N\to \alpha\in (0,1]$, this property is lost and we identify a non-zero limit of the total variation distance between the number of positive spins among any $k$-tuple and the corresponding binomial distribution.
\end{abstract}

\author[Jonas Jalowy]{Jonas Jalowy}
\address[Jonas Jalowy]{Fachbereich Mathematik und Informatik,
Universit\"at M\"unster,
Einsteinstra\ss e 62,
48149 M\"unster,
Germany. Email: {\tt jjalowy@uni-muenster.de}}

\author[Zakhar Kabluchko]{Zakhar Kabluchko}
\address[Zakhar Kabluchko]{Fachbereich Mathematik und Informatik,
Universit\"at M\"unster,
Einsteinstra\ss e 62,
48149 M\"unster,
Germany. Email: {\tt zakhar.kabluchko@uni-muenster.de}}

\author[Matthias L\"owe]{Matthias L\"owe}
\address[Matthias L\"owe]{Fachbereich Mathematik und Informatik,
Universit\"at M\"unster,
Einsteinstra\ss e 62,
48149 M\"unster,
Germany. Email: {\tt maloewe@math.uni-muenster.de}}

\author[Alexander Marynych]{Alexander Marynych}
\address[Alexander Marynych]{Faculty of Computer Science and Cybernetics,
Taras Shevchenko National University of Kyiv,
Volodymyrska 60,
01033 Kyiv,
Ukraine. Email: {\tt marynych@unicyb.kiev.ua}}

\subjclass[2020]{Primary: 82B05, Secondary: 82B20, 60F05}
\keywords{Curie Weiss model, propagation of chaos, local limit theorem, total variation distance, mixture distribution}
\thanks{JJ, ZK and ML are funded by the Deutsche Forschungsgemeinschaft (DFG, German Research Foundation) under Germany's Excellence Strategy EXC 2044 - 390685587, Mathematics M\"unster: \emph{Dynamics-Geometry-Structure}. JJ and ZK have been supported by the DFG priority program SPP 2265 \emph{Random Geometric Systems}.}

\maketitle

\section{Introduction}
The Curie-Weiss model is a mean-field model for ferromagnetism in statistical mechanics.
It is described by a sequence of probability measures, the Gibbs measures $\mu_N$, on the sets $\{-1,+1\}^N$.
These measures are parametrized by a positive parameter $\beta>0$ known
as the inverse temperature and a real parameter $h\in\mathbb{R}$ interpreted as strength of an external magnetic field. Given such $\beta>0$ and $h\in\mathbb{R}$, the Gibbs measure takes the following form:
\be \label{eq:Gibbs}
\mu_N(\sigma):=\mu_{N,\beta,h}(\sigma):= \frac{\exp\left(\frac{\beta}{2N} \sum_{i,j=1}^N \sigma_i \sigma_j+h\sum_{i=1}^{N}\sigma_i\right)}{Z_N} \qquad \sigma:= (\sigma_i)_{i=1}^N\in \{-1,+1\}^N.
\ee
The normalizing constant
\be
Z_N=Z_N(\beta,h)=\sum_{\sigma' \in \{-1,+1\}^N} \exp\left(\frac{\beta}{2N} \sum_{i,j=1}^N \sigma_i' \sigma_j'+h\sum_{i=1}^{N}\sigma_i'\right)
\ee
is called the partition function of the model.

There is a vast literature on the Curie-Weiss model with the main asymptotic results summarized in the textbooks~\cite{BovierSMoDS}, \cite{EllisEntropyLargeDeviationsAndStatisticalMechanics} and \cite{Velenik_book}; see also the papers~\cite{Chatterjee_Shao,EL10,EllisNewman_80}. The order parameter of the model is called the magnetization and is defined by
$$
m_N:=m_N(\sigma) := \frac 1N \sum_{i=1}^N \sigma_i=\frac{2}{N}\mathcal{P}_N-1,
$$
where $\mathcal{P}_N:=\mathcal{P}_N(\sigma):=|\{i=1,\ldots,N:\sigma_i=+1\}|$ is the number of positive spins.  Let $\mu_N\circ m_N^{-1}$ denote the distribution of the random variable $m_N$ under the probability measure $\mu_N$.  The first-order limiting behavior of the magnetization  is given by
\begin{align}\label{eq:mconvergence}
\mu_N\circ m_N^{-1} \Rightarrow
\begin{cases}\delta_{{\tt m}(\beta,h)},& \text{if}\quad h\neq 0\quad\text{or}\quad 0<\beta \le 1,\\ \frac{1}{2}\left(\delta_{{\tt m}(\beta,0)}+ \delta_{-{\tt m}(\beta,0)}\right),& \text{if}\quad h=0\quad\text{and}\quad \beta>1,\\ \end{cases}
\end{align}
which indicates a phase transition at $\beta=1$ in the absence of the external magnetic field ($h=0$). Here, $\todistr$ denotes weak convergence, $\delta_x$ is the Dirac measure at $x$ and ${\tt m}(\beta,h)$ is the largest in absolute value solution to
\begin{align}\label{eq:tanhh}
z=\tanh(\beta z+h).
\end{align}
This solution is unique and positive if $h>0$, unique and negative if $h<0$, and it is equal to zero if $h=0$ and $0<\beta  \leq 1$. If $h=0$ and $\beta>1$, equation~\eqref{eq:tanhh} has two non-zero solutions ${\tt m}(\beta,0)$ and $-{\tt m}(\beta,0)$.

For the purpose of the present paper it is important that the  weak convergence~\eqref{eq:mconvergence} is accompanied by the corresponding Gaussian approximations whenever $\beta\neq 1$ or $h\neq 0$. Let us briefly summarize these, postponing the rigorous statements to Section~\ref{sec:clt_llt_magnetization} below. By definition, $|{\tt m}(\beta,h)|<1$ and it is also true that $0<\beta(1-{\tt m}^2(\beta,h))<1$ for all $\beta>0$, $h\in \R$ excluding the ``critical'' case $(\beta, h) = (1,0)$.
Put
\begin{equation}\label{eq:v_beta_h_def}
{\tt v}^2_{\beta,h}:=\frac{1-{\tt m}^2(\beta,h)}{1-\beta(1-{\tt m}^2(\beta,h))},\quad
(\beta, h) \neq (1,0).
\end{equation}
If $ 0 <  \beta < 1$ and $h=0$, then $\sqrt N m_N$ has Gaussian fluctuations under $\mu_N$ with expectation ${\tt m}(\beta,0)=0$ and variance ${\tt v}^2_{\beta,0}=1/(1-\beta)$. Of course, this cannot be true for $\beta=1$, $h=0$. In this case, the distribution of $N^{1/4} m_N$ has a limiting density
\be
f_1(x)=\frac{\exp(-\frac 1{12} x^4)}{\int_\R \exp(-\frac 1{12} y^4)dy}.\ee
For $\beta > 1$, still assuming $h=0$, there is a conditional central limit theorem for $\sqrt N(m_N-{\tt m}(\beta,0))$ (respectively $\sqrt N(m_N+{\tt m}(\beta,0))$, conditioned on
$m_N>0$ (respectively, $m_N<0$). In this case the limiting expectation is $0$ and the limiting variance is ${\tt v}^2_{\beta,0}$. Finally, if $h\neq 0$, there are no phase transitions as $\beta$ varies in $(0,
\infty)$ and  $\sqrt N(m_N-{\tt m}(\beta,h))$ converges to the centred normal distribution with variance ${\tt v}^2_{\beta,h}$.

\subsection{Propagation of chaos and the main results}

For the time being, fix $k\in\mathbb{N}$ and pick any $k$-tuple among $N$ spins.
The propagation of chaos paradigm for Gibbs measure states that for a
mean-field model as the Curie-Weiss model the marginal distributions of the $k$ spins become asymptotically independent. This shall
be investigated in the sequel.
Since the family of random variables $(\sigma_i)_{i=1}^N$ is exchangeable under the Gibbs measure $\mu_N$, without loss of generality we
may pick the first $k$ spins and consider their marginal distribution $\mu^{(k)}_{N,\beta,h}$.
Let $\mathcal{P}_{k}:=|\{j\in\{1, \ldots,k\}:\sigma_j=+1\}|$ be the number of positive spins among the picked ones. Note that
$\mathcal{P}_{k}$ completely determines $\mu^{(k)}_{N,\beta,h}$, hence we might as well study the distribution of $\mathcal{P}_{k}$
under $\mu_N$,  denoted by $\mu_N\circ \mathcal{P}_{k}^{-1}$.
Intuitively, if $h=0$ and $ 0 < \beta<1$, as $N\to\infty$, the first $k$ spins should indeed be asymptotically independent and
take values $\pm 1$ with the same probability $1/2$.
This implies that the distribution of $\mathcal{P}_{k}$ should be close to the binomial distribution with parameters $k$ and $1/2$.
This can formally be written as follows. Let $\Bin(n,p)$ denote a binomial distribution with parameters $n\in\N_0$ and $p\in [0,1]$:
\begin{equation}\label{eq:binomial_def}
\Bin(n,p)(\{j\})=\binom{n}{j}p^{j}(1-p)^{n-j},\quad 0\leq j\leq n.
\end{equation}
Throughout the paper we shall slightly abuse the notation and write mixtures of distributions by simply writing a mixing distribution instead of a parameter which is being mixed. For example, for any distribution $\mathcal L(W)$ of a random variable $W$ on $[0,1]$, the distribution $\Bin(n,\mathcal L(W))$ should be understood as
$$
\Bin(n,\mathcal L (W))(\{j\})=\E\big(\Bin(n,W)(\{j\})\big).
$$
Recall that the total variation distance $d_{TV}$ between two probability measures $\mathfrak{M}_1$ and $\mathfrak{M}_2$ on $\R$ is defined by
$$
d_{TV}(\mathfrak{M}_1,\mathfrak{M}_2)=\sup_{A\in\mathcal{B}(\R)}|\mathfrak{M}_1(A)-\mathfrak{M}_2(A)|,
$$
where $\mathcal{B}(\R)$ is the Borel sigma-algebra. Recall also that $k\in \N$ is fixed for the time being. The 'propagation of chaos' phenomenon for the Curie-Weiss model tells us that, if $h=0$  and $0<\beta<1$, then
\begin{equation}\label{eq:chaos_prop_basic}
\lim_{N\to\infty}d_{TV}(\mu_N\circ \mathcal{P}_{k}^{-1},\Bin(k,1/2))=0.
\end{equation}
On the other hand, for $h= 0$ and $\beta>1$, it is known that
\begin{equation}\label{eq:chaos_prop_basic_beta_greater_1}
\lim_{N\to\infty}d_{TV}\big(\mu_N\circ \mathcal{P}_{k}^{-1},\Bin\big(k,\frac{1}{2}(\delta_{(1+{\tt m}(\beta,0))/2}+\delta_{(1-{\tt m}(\beta,0))/2}\big)\big)=0.
\end{equation}

Note that the approximating distribution is a mixture of two binomial distributions, one with success probability $(1+{\tt m}(\beta,0))/2$ and the other one with $(1-{\tt m}(\beta,0))/2$.
Finally, it is known that for $h\neq 0$  (and arbitrary $\beta>0$), relation~\eqref{eq:chaos_prop_basic} holds with $1/2$ replaced by $(1+{\tt m}(\beta,h))/2$.

Assume now that $k=k(N)$ depends on $N$ and $\lim_{N\to\infty}k(N)=\infty$.  The aim of the this note is to answer the question whether the analogues of~\eqref{eq:chaos_prop_basic} and~\eqref{eq:chaos_prop_basic_beta_greater_1} hold true in this case and if not, what is a threshold on the growth of $k(N)$ such that the limit in~\eqref{eq:chaos_prop_basic} or~\eqref{eq:chaos_prop_basic_beta_greater_1}
becomes non-zero and what is the value of the limit.
Note that for $k(N)/N\to 0$, that is $k=o(N)$, propagation of chaos has been explicitly shown to hold for $\beta \neq 1$ and $h=0$ in
\cite{BAZ_chaos} (note that the authors consider relative entropy rather than total variation distance and that their model also covers $h\neq 0$ implicitly).
Our answer is provided by Theorem~\ref{thm:main1} below, which is our main result.
To simplify its formulation, let us introduce additional notation.

For ${\tt m}\in\R$ and ${\tt v}^2>0$ put
\begin{equation}\label{eq:normal density}
\varphi(t;{\tt m},{\tt v}^2):=\frac{1}{{\tt v}\sqrt{2\pi}}\exp{\left(-\frac{(t-{\tt m})^2}{2{\tt v}^2}\right)},\quad t\in\mathbb{R}.
\end{equation}
That is to say,  $t\mapsto \varphi(t;{\tt m},{\tt v}^2)$ is the density of a Gaussian distribution with mean
${\tt m}$ and variance ${\tt v}^2$. Define
\begin{equation}\label{eq:def_D_dist_normal_distr}
D({\tt v}_1^2,{\tt v}_2^2):=\frac{1}{2}\int_{\mathbb{R}}\left|\varphi(t;0,{\tt v}^2_1)-\varphi(t;0,{\tt v}^2_2)\right|{\rm d}t
\end{equation}
to be the total variation distance between two centred Gaussian distributions, which can easily be calculated in terms of the error function. Then our main result reads as follows.

\begin{theorem}\label{thm:main1}
Let $k=k(N)$ be any sequence of positive integers such that
\begin{equation}\label{eq:K_N_linear}
\lim_{N\to\infty} k(N)=+\infty,\quad\lim_{N\to\infty}\frac{k(N)}{N}=\alpha \in [0,1].
\end{equation}
If $h=0$ and $\beta\in (0,1)$, then
\begin{equation}\label{eq:thm1_claim1}
d_{TV}(\mu_N\circ \mathcal{P}_{k(N)}^{-1},\Bin(k(N),1/2))=D\left(\frac{1-\beta+\alpha\beta}{4(1-\beta)},\frac{1}{4}\right).
\end{equation}
If $h=0$ and $\beta>1$, then
\begin{multline}\label{eq:thm1_claim2}
d_{TV}\left(\mu_N\circ \mathcal{P}_{k(N)}^{-1},\Bin\left(k(N),\frac{1}{2}(\delta_{(1+{\tt m}(\beta,0))/2}+\delta_{(1-{\tt m}(\beta,0))/2})\right)\right)\\
=D\left(\frac{1-{\tt m}^2(\beta,0)}{4}\left(1+\frac{\alpha\beta(1-{\tt m}^2(\beta,0))}{1-\beta(1-{\tt m}^2(\beta,0))}\right),\frac{1-{\tt m}^2(\beta,0)}{4}\right).
\end{multline}
If $h\neq 0$ and $\beta>0$, then
\begin{multline}\label{eq:thm1_claim3}
d_{TV}\big(\mu_N\circ \mathcal{P}_{k(N)}^{-1},\Bin(k(N),(1+{\tt m}(\beta,h))/2)\big)\\
=D\left(\frac{1-{\tt m}^2(\beta,h)}{4}\left(1+\frac{\alpha\beta(1-{\tt m}^2(\beta,h))}{1-\beta(1-{\tt m}^2(\beta,h))}\right),\frac{1-{\tt m}^2(\beta,h)}{4}\right).
\end{multline}

In particular, if $k(N)=o(N)$, then $\alpha=0$ and the right-hand sides in~\eqref{eq:thm1_claim1}, \eqref{eq:thm1_claim2} and~\eqref{eq:thm1_claim3} vanish, meaning that the 'propagation of chaos' persists for any $k(N)$ of a sublinear growth.
\end{theorem}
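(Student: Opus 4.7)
The plan is to exploit exchangeability to represent $\mu_N\circ\mathcal{P}_{k(N)}^{-1}$ as a hypergeometric mixture driven by the law of the magnetization, and then to reduce the TV comparison with the (mixed) binomial to a TV comparison of two centred Gaussians via local central limit theorems (LCLTs). Since $(\sigma_i)_{i=1}^N$ is exchangeable under $\mu_N$, the conditional law of $(\sigma_1,\ldots,\sigma_N)$ given $\mathcal{P}_N=j$ is uniform on the $\binom{N}{j}$ strings with exactly $j$ pluses, so the number of pluses among the first $k(N)$ coordinates is hypergeometric:
\begin{equation*}
\mu_N(\mathcal{P}_{k(N)}=i\mid \mathcal{P}_N=j)=\HG(N,j,k(N))(\{i\}),\qquad \mu_N\circ\mathcal{P}_{k(N)}^{-1}=\HG(N,\mu_N\circ\mathcal{P}_N^{-1},k(N)),
\end{equation*}
where on the right I use the mixture convention introduced after~\eqref{eq:binomial_def}.

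Next I would plug two LCLTs into this representation. The first is the LCLT counterpart of the Gaussian approximations for $m_N$ anticipated in Section~\ref{sec:clt_llt_magnetization}, which in the cases $h\neq 0$ or $h=0,\beta<1$ reads
\begin{equation*}
\mu_N(\mathcal{P}_N=j)\approx N^{-1/2}\,\varphi\bigl(N^{-1/2}(j-N(1+{\tt m}(\beta,h))/2);\,0,\,{\tt v}^2_{\beta,h}/4\bigr),
\end{equation*}
and in the case $h=0,\beta>1$ holds within each phase $\{\sgn m_N=\pm 1\}$, each of $\mu_N$-mass tending to $1/2$. The second is the classical LCLT for the hypergeometric law,
\begin{equation*}
\HG(N,j,k)(\{i\})\approx k^{-1/2}\,\varphi\!\left(k^{-1/2}(i-kj/N);\;0,\;\tfrac{j(N-j)(N-k)}{N^2(N-1)}\right),
\end{equation*}
uniform for $j/N$ in a compact subinterval of $(0,1)$. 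Convolving these two Gaussians as dictated by the mixture identity above produces an LCLT for $\mathcal{P}_{k(N)}$: writing ${\tt m}={\tt m}(\beta,h)$, the centred and rescaled quantity $(\mathcal{P}_{k(N)}-k(N)(1+{\tt m})/2)/\sqrt{k(N)}$ is asymptotically $\Normal(0,\sigma_1^2)$ with
\begin{equation*}
\sigma_1^2=\frac{1-{\tt m}^2}{4}\cdot\frac{N-k}{N-1}+\frac{k}{N}\cdot\frac{{\tt v}^2_{\beta,h}}{4}\longrightarrow\frac{1-{\tt m}^2}{4}\!\left(1+\frac{\alpha\beta(1-{\tt m}^2)}{1-\beta(1-{\tt m}^2)}\right),
\end{equation*}
the first term being the intrinsic hypergeometric variance and the second the contribution of the Gaussian fluctuation of $m_N$; for $h=0,\beta<1$ this specializes to $(1-\beta+\alpha\beta)/(4(1-\beta))$. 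The LCLT for $\Bin(k(N),(1+{\tt m})/2)$ gives, by contrast, an asymptotic variance $\sigma_2^2=(1-{\tt m}^2)/4$.

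With both mass functions replaced by their Gaussian approximations, the symmetric sum $\tfrac12\sum_i|\mu_N(\mathcal{P}_{k(N)}=i)-\Bin(k(N),(1+{\tt m})/2)(\{i\})|$ is a Riemann sum with step $k(N)^{-1/2}$ for $\tfrac12\int_\R|\varphi(t;0,\sigma_1^2)-\varphi(t;0,\sigma_2^2)|\,\mathrm{d}t=D(\sigma_1^2,\sigma_2^2)$, and a Scheffé-type estimate yields~\eqref{eq:thm1_claim1} and~\eqref{eq:thm1_claim3} in the limit; the sublinear regime $k(N)=o(N)$ corresponds to $\alpha=0$, so $\sigma_1^2=\sigma_2^2$ and $D=0$, recovering propagation of chaos. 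In the phase-transition regime $h=0,\beta>1$, both the Gibbs marginal and the target are $\tfrac12$-$\tfrac12$ mixtures indexed by $\sgn(m_N)$; the two phase components of $\mathcal{P}_{k(N)}$ are centred at $k(N)(1\pm{\tt m}(\beta,0))/2$, at distance of order $k(N)$ from one another, while their spreads are only $O(\sqrt{k(N)})$, so the supports are asymptotically disjoint, the TV distance of the mixtures equals the within-phase TV distance, and the $h\neq 0$ computation applies with ${\tt m}=\pm{\tt m}(\beta,0)$ to produce~\eqref{eq:thm1_claim2}.

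The main technical obstacle I anticipate is producing an LCLT for $\mathcal{P}_N$ that is sufficiently \emph{uniform} (together with sub-Gaussian tail bounds on $m_N$ away from the typical value(s)) to justify the Scheffé step after summation against the hypergeometric kernel; mere convergence in distribution of $m_N$ is insufficient to interchange limit and absolute value. A secondary subtlety is the boundary case $\alpha=1$, in which $(N-k)/(N-1)\to 0$ makes the hypergeometric variance collapse and one has to renormalize by $\sqrt{N-k}$ rather than $\sqrt{k}$; the limits $\sigma_1^2$, $\sigma_2^2$ and $D(\sigma_1^2,\sigma_2^2)$ nevertheless remain well defined, so the same comparison still goes through.
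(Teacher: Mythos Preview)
Your strategy is sound and would lead to a correct proof, but it differs from the paper's in one key structural choice. You aim to establish an LCLT for $\mathcal{P}_{k(N)}$ \emph{directly}, by composing the LCLT for $\mathcal{P}_N$ with a uniform-in-$j$ hypergeometric LCLT and then reading off the Gaussian convolution. The paper instead inserts an intermediate \emph{beta-binomial} approximation: it first shows (via matching LCLTs) that $\mathcal{L}(\mathcal{P}_N)$ is TV-close to $\Bin(N,\BETA(\gamma_1 N,\gamma_2 N))$ for suitably chosen $\gamma_1,\gamma_2$, then uses the algebraic identity $\HG(n,\Bin(n,p),k)=\Bin(k,p)$ to transport this through the hypergeometric kernel, obtaining $\mu_N\circ\mathcal{P}_{k(N)}^{-1}\approx_{TV}\Bin(k(N),\BETA(\gamma_1 N,\gamma_2 N))$. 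Only then does it derive an LCLT for this explicit beta-binomial (via Stirling on the closed-form mass function) and compare with the target binomial.

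What each route buys: your approach is conceptually shorter and makes the variance decomposition $\sigma_1^2=\tfrac{1-{\tt m}^2}{4}(1-\alpha)+\alpha\,{\tt v}^2_{\beta,h}/4$ transparent, but it forces you to prove a hypergeometric LCLT uniform over the second parameter and to control the interaction of two approximation errors under summation---precisely the ``main technical obstacle'' you flag. The paper's detour through the beta-binomial sidesteps the uniform hypergeometric LCLT entirely (the identity $\HG(n,\Bin(n,p),k)=\Bin(k,p)$ is exact, and TV contracts under the kernel via the coupling characterization), and it yields as a bonus a structural result of independent interest: the residual dependence among the $k(N)$ spins is captured exactly by a P\'olya-urn mixing law. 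Your remark about $\alpha=1$ is a genuine nuisance for the direct route (the hypergeometric variance degenerates) but is invisible in the paper's approach, since the beta-binomial LCLT is computed from its closed form and never passes through the hypergeometric.
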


 Let us now informally discuss the case when $\alpha > 0$. For simplicity, we consider~\eqref{eq:thm1_claim1}. The limit on the right-hand side is non-zero, which suggests that there is a residual dependence between the $k(N)$ spins under the Gibbs measure. The reason for the non-zero limit is the fact that the distribution of $\mathcal P_{k(N)}$ and the corresponding binomial distribution satisfy  central limit theorems with different variances, the variance of $\mathcal P_{k(N)}$ being strictly larger, which comes from the fact that the spins are positively correlated under the Gibbs measure.  The distance between these normal distributions appears on the right-hand side of~\eqref{eq:thm1_claim1}.
In Theorem~\ref{thm:P_Nk_approx_h_neq_0-beta-in-0-1}, we shall determine a \textit{mixed} binomial distribution which approximates the distribution of $\mathcal P_{k(N)}$ under $\mu_N$. In some sense, this describes  the residual dependence between the spins under the Gibbs measure.  


\begin{remark}
The exchangeability of the measure $\mu_N$ has been used to investigate the Curie-Weiss model for example, in~\cite[Section~5.2]{kirsch_survey:2015} and~\cite{barhoumi_butzek_eichelsbacher:2023}. In particular, an explicit representation of $\mu_N$ as a mixture of Bernoulli measures (valid for each fixed $N$) can be found in~\cite[Theorem~5.6]{kirsch_survey:2015}. A general propagation of chaos principle stating that the distribution of $k$ entries in a finite exchangeable vector of length $n$ can be approximated by a mixture of i.i.d.\ distributions can found in~\cite{diaconis_freedman:1980}.
\end{remark}

The paper is organized as follows. Our proof relies on local limit theorems for the magnetization $m_N$ and also for the total number of positive spins $\mathcal{P}_N$ under $\mu_N$. In some regimes those are known. We collect the corresponding results in Section~\ref{sec:clt_llt_magnetization} below. The proofs of these local limit theorems, which we have not been able to locate in the literature, are given in Section~\ref{sec:proof_magnetization}. The proof of Theorem~\ref{thm:main1} is given in Section~\ref{sec:proof}, including the statement of residual dependence. Two auxiliary technical results related to calculations of the total variation distance are presented in Section~\ref{sec:appendix}.

\section{Local limit theorem for the magnetization}\label{sec:clt_llt_magnetization}

Denote by $\Normal({\tt m},{\tt v}^2)$ a Gaussian distribution with mean ${\tt m}$ and variance ${\tt v}^2$, so
$$
\Normal({\tt m},{\tt v}^2)(A)=\int_{A}\varphi(t;{\tt m},{\tt v}^2){\rm d}t,\quad A\in\mathcal{B}(\R).
$$
Put $\delta_N:=(1-(-1)^N)/2$. This correction term appears below in the local limit theorems for $m_N$, since $Nm_N$ always has the same parity as $N$.

\begin{proposition}\label{prop:clt_llt_h_neq_0-beta-in-0-1}
Assume that $h\neq 0$ or $0<\beta<1$. Then
$$
\mu_N\left(\sqrt{N}(m_N-{\tt m}(\beta,h))\in\cdot\right)~\todistr~\Normal\left(0,{\tt v}^2_{\beta,h}\right),\quad N\to\infty,
$$
and the following local limit theorem holds true:
\begin{equation}\label{eq:clt_llt_h_neq_0-beta-in-0-1}
\lim_{N\to\infty}\sqrt{N}\sup_{\ell\in\mathbb{Z}}\left|\mu_N\left( \frac{Nm_N+\delta_N}{2}=\ell\right)-\varphi\left(\ell;\frac{N{\tt m}(\beta,h)}{2},\frac{N{\tt v}^2_{\beta,h}}{4}\right)\right|=0.
\end{equation}
\end{proposition}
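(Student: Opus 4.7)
My plan is to derive the local limit theorem directly from the explicit formula
$$\mu_N(\mathcal P_N = j)=\frac{1}{Z_N}\binom{N}{j}\exp\!\left(\tfrac{N\beta m_j^2}{2}+hNm_j\right),\qquad m_j:=\frac{2j}{N}-1,$$
which follows from $\sum_{i,k}\sigma_i\sigma_k=(Nm_N)^2$. Inserting the uniform Stirling asymptotics
$$\binom{N}{j}=\sqrt{\tfrac{2}{\pi N(1-m_j^2)}}\,2^N e^{-NI(m_j)}(1+O(1/N)),\qquad I(m):=\tfrac{1+m}{2}\log(1+m)+\tfrac{1-m}{2}\log(1-m),$$
valid uniformly for $m_j$ bounded away from $\pm 1$, recasts the unnormalised probability as $\sqrt{2/(\pi N(1-m_j^2))}\,2^N e^{N\psi(m_j)}(1+O(1/N))$ with $\psi(m):=-I(m)+\beta m^2/2+hm$. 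A short calculation gives $\psi'(m)=\beta m+h-\operatorname{arctanh}(m)$, so critical points of $\psi$ solve~\eqref{eq:tanhh}, and $\psi''(m)=\beta-(1-m^2)^{-1}$, whence $\psi''({\tt m}(\beta,h))=-1/{\tt v}_{\beta,h}^2$ by~\eqref{eq:v_beta_h_def}. Under the hypothesis $h\neq 0$ or $0<\beta<1$, a standard analysis of the Curie-Weiss free energy shows that ${\tt m}={\tt m}(\beta,h)$ is the unique global maximiser of $\psi$ on $[-1,1]$ and that $\psi$ is strictly concave there.

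Next I would apply Laplace's method to $Z_N$, treating it as a Riemann sum with step $2/N$, to obtain
$$Z_N=\frac{2^N {\tt v}_{\beta,h}}{\sqrt{1-{\tt m}^2}}\,e^{N\psi({\tt m})}(1+o(1)).$$
Combining with the previous display, for $\ell$ in the bulk $|\ell-N{\tt m}/2|\leq \sqrt{N}\log N$ I would Taylor expand $\psi(m_j)=\psi({\tt m})+\tfrac{1}{2}\psi''({\tt m})(m_j-{\tt m})^2+O((m_j-{\tt m})^3)$, substitute $m_j-{\tt m}=(2\ell-\delta_N)/N-{\tt m}$, and check that the cubic remainder and the $\delta_N$-correction contribute only $o(1)$ to the exponent uniformly in $\ell$ in this range. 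The resulting expression matches $\varphi(\ell;N{\tt m}/2,N{\tt v}_{\beta,h}^2/4)$ up to a multiplicative factor $1+o(1)$; since the density $\varphi$ itself is $O(1/\sqrt N)$ in the bulk, this yields $|\mu_N(\mathcal P_N=j)-\varphi(\ell;N{\tt m}/2,N{\tt v}_{\beta,h}^2/4)|=o(1/\sqrt N)$ uniformly there. Outside the bulk both quantities are super-polynomially small in $N$ by strict concavity of $\psi$ at ${\tt m}$ (yielding a Chernoff-type bound on $\mu_N$) and by Gaussian tail decay of $\varphi$.

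The main obstacle is propagating the multiplicative $1+o(1)$ errors from Stirling and Laplace into the additive $o(1/\sqrt N)$ bound required by the $\sqrt N\sup_\ell$ normalisation, controlling the cubic remainder on the largest bulk window where the Gaussian density is not yet negligible. A further technical point in the regime $h\neq 0$, $\beta>1$ is to verify that any non-global critical points of $\psi$ contribute only exponentially small corrections to $Z_N$, which requires a careful but standard analysis of the rate function. The weak convergence statement then follows by summing the local limit theorem against continuous bounded test functions.
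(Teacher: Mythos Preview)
Your proposal is correct in outline, but you should know that the paper does \emph{not} actually prove Proposition~\ref{prop:clt_llt_h_neq_0-beta-in-0-1}: it simply cites the literature. Specifically, the paper extracts the case $0<\beta<1$ (any $h$) from R\"ollin--Ross~\cite{Rollin-Ross:2015}, the case $h>0$ (any $\beta>0$) from Barbour--R\"ollin--Ross~\cite{Barbour+Rollin+Ross:2019}, and notes that $h<0$ follows by symmetry. Those references use Stein's method and Landau--Kolmogorov-type inequalities rather than a direct Stirling/Laplace argument.

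Your route --- explicit formula, Stirling, Laplace for $Z_N$, quadratic Taylor expansion of $\psi$ near the unique global maximiser ${\tt m}(\beta,h)$, tail control via strict concavity at the maximum --- is precisely the elementary approach the paper \emph{does} carry out for the companion Proposition~\ref{prop:clt_llt_h=0-beta>1} in Section~\ref{sec:proof_magnetization}. So what you are proposing is to extend the paper's own Section~\ref{sec:proof_magnetization} argument to the single-peak regime, which is entirely natural and in fact simpler (one bump instead of two). The advantage of your approach is that it is self-contained and uniform across all cases; the advantage of citing Stein-method results is that they sometimes yield explicit rates.

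One small imprecision: you write that under the hypothesis ``$\psi$ is strictly concave there''. If ``there'' means all of $[-1,1]$, this is false when $h\neq 0$ and $\beta>1$, since $\psi''(m)=\beta-(1-m^2)^{-1}>0$ for $|m|$ small; you only have strict concavity in a neighbourhood of ${\tt m}$ together with ${\tt m}$ being the unique global maximiser. You do flag this later when mentioning non-global critical points, so you are aware of the issue, but the wording should be tightened. The tail bound outside the bulk then comes not from global concavity but from $\psi(m)<\psi({\tt m})-c\,(m-{\tt m})^2$ for $m$ near ${\tt m}$ and $\psi(m)\le \psi({\tt m})-\delta$ for $m$ bounded away from ${\tt m}$.
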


\begin{proposition}\label{prop:clt_llt_h=0-beta>1}
Assume that $h=0$ and $\beta>1$. Then
$$
\mu_N\left(\sqrt{N}(m_N-(+){\tt m}(\beta,0))\in\cdot\;|\;m_N>(<)0\right)~\todistr~\Normal\left(0,{\tt v}^2_{\beta,0}\right),\quad N\to\infty,
$$
and the following local limit theorem holds true:
\begin{multline}\label{eq:clt_llt_h=0-beta>1}
\lim_{N\to\infty}\sqrt{N}\sup_{\ell\in\mathbb{Z}}\Big|\mu_N\left(\frac{Nm_N+\delta_N}{2}=\ell\right)
\\-\frac{1}{2}\left(\varphi\left(\ell;\frac{N{\tt m}(\beta,0)}{2},\frac{N{\tt v}^2_{\beta,0}}{4}\right)+\varphi\left(\ell;-\frac{N{\tt m}(\beta,0)}{2},\frac{N{\tt v}^2_{\beta,0}}{4}\right)\right)\Big|=0.
\end{multline}
\end{proposition}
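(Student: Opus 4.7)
The plan is to prove the local limit theorem by working directly with the explicit probability mass function of $m_N$, combining a sharp Stirling approximation for the binomial coefficient with a Laplace-type analysis of the partition function. For each admissible $x=(2\ell-\delta_N)/N$ one writes
$$
\mu_N(m_N=x)=\frac{1}{Z_N}\binom{N}{(N+Nx)/2}\exp\!\left(\frac{\beta Nx^2}{2}\right),
$$
and Stirling's formula yields, uniformly on compact subsets of $(-1,1)$,
$$
\binom{N}{(N+Nx)/2}=\frac{2^{N+1}}{\sqrt{2\pi N(1-x^2)}}\,\eee^{-N\tilde I(x)}(1+O(1/N)),\qquad \tilde I(x):=\frac{1+x}{2}\log(1+x)+\frac{1-x}{2}\log(1-x).
$$
Setting $F_\beta(x):=\tilde I(x)-\beta x^2/2$ one computes $F_\beta'(x)=\tanh^{-1}(x)-\beta x$ and $F_\beta''(x)=1/(1-x^2)-\beta$, so in view of \eqref{eq:tanhh}, for $\beta>1$ and $h=0$ the function $F_\beta$ has two symmetric global minima at $\pm{\tt m}(\beta,0)$ with common second derivative $1/{\tt v}^2_{\beta,0}$.

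A Laplace-type evaluation of the Riemann sum $Z_N=\sum_x\binom{N}{(N+Nx)/2}\eee^{\beta Nx^2/2}$ at both minima then gives
$$
Z_N=\frac{2^{N+1}\,{\tt v}_{\beta,0}}{\sqrt{1-{\tt m}^2(\beta,0)}}\,\eee^{-NF_\beta({\tt m}(\beta,0))}(1+o(1)),
$$
the overall factor of two reflecting the equal contributions from $+{\tt m}$ and $-{\tt m}$. Dividing through produces, uniformly on compact subsets of $(-1,1)$,
$$
\mu_N(m_N=x)=\frac{\sqrt{1-{\tt m}^2(\beta,0)}}{{\tt v}_{\beta,0}\sqrt{2\pi N(1-x^2)}}\,\eee^{-N(F_\beta(x)-F_\beta({\tt m}(\beta,0)))}(1+o(1)).
$$
Taylor expanding $F_\beta(x)-F_\beta(\pm{\tt m})=(x\mp{\tt m})^2/(2{\tt v}^2_{\beta,0})+O(|x\mp{\tt m}|^3)$ and substituting $x=(2\ell-\delta_N)/N$ identifies each exponential factor with $\varphi(\ell;\pm N{\tt m}(\beta,0)/2,N{\tt v}^2_{\beta,0}/4)$ up to a $1+o(1)$ multiplicative error, and the sum of the two contributions reproduces the mixture appearing in \eqref{eq:clt_llt_h=0-beta>1}, since in each central region the ``other'' Gaussian is exponentially small.

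The main technical difficulty lies in making the approximation uniform in $\ell\in\Z$ at the sharp rate $o(N^{-1/2})$ required by the statement. One splits the range of $\ell$: in the central regions $|\ell\mp N{\tt m}(\beta,0)/2|\le N^{3/5}$ the cubic Taylor remainder and the $\delta_N$-correction combine into an $o(1)$ absolute error in the exponent, producing a $1+o(1)$ multiplicative error on top of a leading density of order $N^{-1/2}$, hence an $o(N^{-1/2})$ absolute error. Outside these regions the strict convexity of $F_\beta$ at its minima forces both $\mu_N(m_N=x)$ and its Gaussian counterpart to decay at least as $\eee^{-cN^{1/5}}$, while for $x$ bounded away from $\pm{\tt m}(\beta,0)$ the same Laplace analysis yields $\mu_N(m_N=x)\le \eee^{-cN}$; the degeneracy of Stirling near the boundary $x=\pm1$ is handled by a separate crude bound on the binomial, which still beats the required sub-Gaussian scale. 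The accompanying conditional central limit theorem then follows from the LLT by standard summation of the local probabilities over blocks of order $\sqrt N$.
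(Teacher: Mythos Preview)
Your proposal is correct and follows essentially the same approach as the paper's proof: both compute the explicit pmf via Stirling's formula, identify the function $F_\beta(x)=\mathcal{I}(x)-\beta x^2/2$ with its two symmetric global minima at $\pm{\tt m}(\beta,0)$, use the sharp asymptotic for $Z_N$ (which the paper cites from the literature while you rederive it by Laplace), Taylor-expand to second order near each minimum in a window of width $N^{1/2+\varepsilon}$ (you choose $N^{3/5}$, the paper $N^{7/12}$), and dispose of the tails via the positivity of $F_\beta(x)-F_\beta({\tt m})$ away from the minima. The only cosmetic difference is that the paper first exploits the $\ell\mapsto-\ell$ symmetry to reduce to $\ell\ge 0$ and a single peak, whereas you keep both peaks throughout; either way the argument and the estimates are the same.
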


For some values of $(\beta,h)$ the above local limit theorems can be extracted from the vast literature on the Curie-Weiss model. For
example, in the high-temperature regime $\beta\in (0,1)$
and for every $h\in\mathbb{R}$,~\eqref{eq:clt_llt_h_neq_0-beta-in-0-1} has been proved
in~\cite[Theorem 4.5 and Eq.~(4.4)]{Rollin-Ross:2015}. If $h>0$ and $\beta>0$, then~\eqref{eq:clt_llt_h_neq_0-beta-in-0-1} can be found
in~\cite[Theorem 2.14 and Lemma 1.1]{Barbour+Rollin+Ross:2019}. The missing case $h<0$ and $\beta>0$ in
Proposition~\ref{prop:clt_llt_h_neq_0-beta-in-0-1} can be derived by the same methods. Finally, if $h=0$, a local limit theorem for a multi-group Curie-Weiss model in
the  high-temperature regime $\beta\in(0,1)$ has been derived in~\cite{Fleermann+Kirsch+Toth:2022}. Quite (un-)expectedly, we have not been
able to locate Proposition~\ref{prop:clt_llt_h=0-beta>1} in the literature, because of the non-standard approximation by a mixture of
normal distributions. We shall give an elementary proof based on the Stirling approximation in Section~\ref{sec:proof_magnetization}.

We shall actually need local limit theorems for $\mathcal{P}_N$ rather than $m_N$. They follow immediately from
Propositions~\ref{prop:clt_llt_h_neq_0-beta-in-0-1} and~\ref{prop:clt_llt_h=0-beta>1} using the obvious relation
$
\mathcal{P}_N=\frac{N}{2}(1+m_N)
$
together with the bound
$$
|\varphi(t_1,{\tt m},{\tt v}^2)-\varphi(t_2,{\tt m},{\tt v}^2)|\leq C\frac{|t_2-t_2|}{{\tt v}^2},\quad t_1,t_2\in\mathbb{R},
$$
for some absolute constant $C>0$, which is a consequence of the mean value theorem for differentiable functions. The above bound allows us to neglect the correction term $\delta_N$ appearing in local limit theorems for $m_N$.
\begin{corollary}\label{cor:clt_llt_h_neq_0-beta-in-0-1}
Assume that $h\neq 0$ or $0<\beta<1$. Then
$$
\mu_N\left(N^{-1/2}(\mathcal{P}_N-\tfrac N2(1+{\tt m}(\beta,h)))\in\cdot\right)~\todistr~\Normal\left(0,\tfrac 1 4{\tt v}^2_{\beta,h}\right),\quad N\to\infty,
$$
and the following local  limit theorem holds true
$$
\lim_{N\to\infty}\sqrt{N}\sup_{\ell\in\mathbb{Z}}\left|\mu_N\left(\mathcal{P}_N=\ell\right)-\varphi(\ell;\tfrac N2(1+{\tt m}(\beta,h)),\tfrac N4{\tt v}^2_{\beta,h})\right|=0.
$$
\end{corollary}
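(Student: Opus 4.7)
My plan is to derive both claims directly from Proposition~\ref{prop:clt_llt_h_neq_0-beta-in-0-1} via the deterministic identity $\mathcal{P}_N = N(1+m_N)/2$. This identity yields
\[
N^{-1/2}\bigl(\mathcal{P}_N - \tfrac{N}{2}(1+{\tt m}(\beta,h))\bigr) \;=\; \tfrac{1}{2}\sqrt{N}\bigl(m_N - {\tt m}(\beta,h)\bigr),
\]
so weak convergence of the left-hand side to $\Normal(0,\tfrac14{\tt v}^2_{\beta,h})$ follows at once from the CLT part of Proposition~\ref{prop:clt_llt_h_neq_0-beta-in-0-1} together with the continuous mapping theorem applied to $x\mapsto x/2$, which rescales the limiting variance by a factor of $1/4$.

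For the local limit statement the first step is to match the events. Since $Nm_N$ is always congruent to $N$ modulo $2$, the assignment $\ell \mapsto \ell' := \ell + (\delta_N - N)/2$ is a bijection $\Z\to\Z$ satisfying $\{\mathcal{P}_N = \ell\} = \{(Nm_N+\delta_N)/2 = \ell'\}$. Substituting this into \eqref{eq:clt_llt_h_neq_0-beta-in-0-1} yields
\[
\lim_{N\to\infty}\sqrt{N}\sup_{\ell\in\Z}\Bigl|\mu_N(\mathcal{P}_N=\ell) - \varphi\bigl(\ell';\,\tfrac{N{\tt m}(\beta,h)}{2},\,\tfrac{N{\tt v}^2_{\beta,h}}{4}\bigr)\Bigr| = 0.
\]
A one-line algebraic check shows that
\[
\ell - \tfrac{N}{2}(1+{\tt m}(\beta,h)) \;=\; \bigl(\ell' - \tfrac{N{\tt m}(\beta,h)}{2}\bigr) - \tfrac{\delta_N}{2},
\]
so the density above and the target density in the corollary share the same variance $N{\tt v}^2_{\beta,h}/4$ and differ only by a translation of magnitude $\delta_N/2 \in \{0,\tfrac12\}$ of the mean.

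The final step is to apply the uniform Lipschitz-in-the-mean bound displayed just before the corollary: a mean translation of size at most $1/2$ changes $\varphi(\,\cdot\,;\,\cdot\,,{\tt v}^2)$ uniformly by at most $C/(2{\tt v}^2) = O(1/N)$, since here ${\tt v}^2 = N{\tt v}^2_{\beta,h}/4$. Multiplying by $\sqrt{N}$ produces $O(N^{-1/2}) = o(1)$, so the two target densities may be swapped inside the supremum at negligible cost, and the claimed local limit theorem for $\mathcal{P}_N$ follows. I do not anticipate a real obstacle here; the only delicate point is the parity bookkeeping that converts a statement about $(Nm_N+\delta_N)/2$ (coming from $Nm_N$ supported on integers of a fixed parity) into one about $\mathcal{P}_N$, which is precisely the reason $\delta_N$ was introduced in Proposition~\ref{prop:clt_llt_h_neq_0-beta-in-0-1}.
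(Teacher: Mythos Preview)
Your argument is correct and follows the same route as the paper: both use the identity $\mathcal{P}_N=\tfrac{N}{2}(1+m_N)$ to transfer the CLT and LLT from Proposition~\ref{prop:clt_llt_h_neq_0-beta-in-0-1}, and both dispose of the parity correction $\delta_N$ via the Lipschitz bound $|\varphi(t_1;{\tt m},{\tt v}^2)-\varphi(t_2;{\tt m},{\tt v}^2)|\le C|t_1-t_2|/{\tt v}^2$ with ${\tt v}^2=N{\tt v}^2_{\beta,h}/4$. Your write-up simply makes the integer bijection $\ell\mapsto\ell'=\ell+(\delta_N-N)/2$ explicit, which the paper leaves implicit.
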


\begin{corollary}\label{cor:clt_llt_h=0-beta>1}
Assume that $h=0$ and $\beta>1$. Then
$$
\mu_N\left(N^{-1/2}\big(\mathcal{P}_N-\tfrac N2(1+(-){\tt m}(\beta,0))\big)\in\cdot\;|\;\mathcal{P}_N>(<)\tfrac N2\right)\todistr~\Normal\left(0,\tfrac 1 4{\tt v}^2_{\beta,0}\right),\quad N\to\infty,
$$
and the following local limit theorem holds true
\begin{multline*}
\lim_{N\to\infty}\sqrt{N}\sup_{\ell\in\mathbb{Z}}\Big|\mu_N\left(\mathcal{P}_N=\ell\right)
-\tfrac{1}{2}\left(\varphi\left(\ell;\tfrac{N}2(1+{\tt m}(\beta,0)),\tfrac{N}4 {\tt v}^2_{\beta,0}\right)+\varphi\left(\ell;\tfrac{N}2(1-{\tt m}(\beta,0)),\tfrac{N}4{\tt v}^2_{\beta,0}\right)\right)\Big|=0.
\end{multline*}
\end{corollary}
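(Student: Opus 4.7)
The proof is a direct translation from Proposition~\ref{prop:clt_llt_h=0-beta>1} via the deterministic bijection $\mathcal{P}_N = \tfrac{N}{2}(1+m_N)$, i.e., $Nm_N = 2\mathcal{P}_N - N$. In particular, since $N-\delta_N$ is even,
$$
\frac{Nm_N+\delta_N}{2} = \mathcal{P}_N - \frac{N-\delta_N}{2},
$$
with $(N-\delta_N)/2 \in \Z$, so the LLT for the left-hand side in Proposition~\ref{prop:clt_llt_h=0-beta>1} becomes an LLT for $\mathcal{P}_N$ after a deterministic integer shift of the argument~$\ell$.

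For the conditional CLT part, I would observe that $\{m_N > 0\} = \{\mathcal{P}_N > N/2\}$ (and likewise for $<$) and that
$$
N^{-1/2}\bigl(\mathcal{P}_N - \tfrac{N}{2}(1 \pm {\tt m}(\beta,0))\bigr) = \tfrac{1}{2}\sqrt{N}\bigl(m_N \mp {\tt m}(\beta,0)\bigr),
$$
so the weak limit follows from the conditional CLT in Proposition~\ref{prop:clt_llt_h=0-beta>1} together with the affine scaling rule for Gaussians: the limiting variance ${\tt v}^2_{\beta,0}$ becomes $\tfrac{1}{4}{\tt v}^2_{\beta,0}$.

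For the LLT, I would substitute $\ell \mapsto \ell - (N-\delta_N)/2$ in~\eqref{eq:clt_llt_h=0-beta>1}, giving
$$
\mu_N\bigl(\mathcal{P}_N = \ell\bigr) = \tfrac{1}{2}\bigl(\varphi(\ell - \tfrac{N-\delta_N}{2};\tfrac{N{\tt m}(\beta,0)}{2},\tfrac{N{\tt v}^2_{\beta,0}}{4}) + \varphi(\ell - \tfrac{N-\delta_N}{2};-\tfrac{N{\tt m}(\beta,0)}{2},\tfrac{N{\tt v}^2_{\beta,0}}{4})\bigr) + o(N^{-1/2}),
$$
uniformly in $\ell$. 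The target expression in the corollary has the Gaussians centered at $\tfrac{N}{2}(1\pm{\tt m}(\beta,0))$, and the discrepancy between these centerings and the ones above is exactly $\delta_N/2 \in \{0,1/2\}$, independent of $\ell$.

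The only real step is to absorb this shift uniformly in $\ell$, which is where the displayed Lipschitz bound $|\varphi(t_1;{\tt m},{\tt v}^2) - \varphi(t_2;{\tt m},{\tt v}^2)| \leq C|t_1-t_2|/{\tt v}^2$ applies: with ${\tt v}^2 = N{\tt v}^2_{\beta,0}/4$ and $|t_1-t_2| \leq 1/2$, the error is $O(1/N)$ uniformly in $\ell$, so after multiplication by $\sqrt{N}$ it vanishes and can be absorbed into the $o(N^{-1/2})$ term. The same recipe applied to Proposition~\ref{prop:clt_llt_h_neq_0-beta-in-0-1} yields Corollary~\ref{cor:clt_llt_h_neq_0-beta-in-0-1}. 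I do not expect any real obstacle here; it is purely bookkeeping, with the $\delta_N$-absorption via the Lipschitz estimate being the sole nontrivial ingredient.
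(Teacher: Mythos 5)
Your proposal is correct and matches the paper's argument: the paper likewise derives the corollary from Proposition~\ref{prop:clt_llt_h=0-beta>1} via $\mathcal{P}_N=\tfrac N2(1+m_N)$, using the Lipschitz bound on the normal density to absorb the $\delta_N$ shift uniformly in $\ell$. The bookkeeping you describe (integer shift of $\ell$, discrepancy $\delta_N/2$ of order $O(1/N)$ after dividing by the variance $N{\tt v}^2_{\beta,0}/4$, hence negligible after multiplying by $\sqrt N$) is exactly the intended proof.
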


\section{Proof of Theorem~\ref{thm:main1}}\label{sec:proof}

We embark on a simple observation which is a consequence of exchangeability of the spins under the Gibbs measure $\mu_N$. Given that $\mathcal{P}_N=i\in\N_0$
, the conditional distribution of $\mathcal{P}_k$ is hypergeometric with parameters $N$, $i$ and $k$ denoted hereafter $\HG(N,i,k)$. Recall that
$$
\HG(n,i,k)(\{j\})=\frac{\binom{i}{j}\binom{n-i}{k-j}}{\binom{n}{k}},\quad 0\leq j\leq \min(i,k),\quad i\leq n.
$$
In other words,  the distribution of $\mathcal{P}_{k(N)}$ can be represented as the following mixture of hypergeometric distributions:
\begin{align}\label{eq:distribution_Pk}
\mu_N\circ\mathcal{P}_{k(N)}^{-1}=\HG(N,\mathcal P_N,k(N)).
\end{align}
The family of hypergeometric distributions possesses the following  property 
which is of major importance for us. 
\begin{lemma}\label{lem:hypergeometric_binomial}
For $n\in\N_0, k\le n$ and $p\in [0,1]$, it holds 
$\HG(n,\Bin(n,p),k)=\Bin(k,p).$
\end{lemma}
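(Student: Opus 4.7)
The statement is a classical identity, and I would present two routes, one probabilistic and one algebraic, choosing whichever fits the paper's style.

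The conceptual route is to exhibit a single coupling that realizes both sides. Let $\xi_1,\dots,\xi_n$ be i.i.d.\ Bernoulli random variables with success probability $p$, so that $S:=\xi_1+\cdots+\xi_n\sim\Bin(n,p)$. Independently of $(\xi_i)$, draw a uniformly random $k$-subset $I\subset\{1,\dots,n\}$ and set $T:=\sum_{i\in I}\xi_i$. On one hand, conditional on $S=i$, the vector $(\xi_1,\dots,\xi_n)$ is a uniformly random arrangement of $i$ ones and $n-i$ zeros, so $T\mid S=i\sim\HG(n,i,k)$; hence the unconditional law of $T$ equals the mixture $\HG(n,\Bin(n,p),k)$, which is the left-hand side. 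On the other hand, by exchangeability and independence, $(\xi_i)_{i\in I}$ has the same law as $k$ i.i.d.\ Bernoulli$(p)$ variables, so $T\sim\Bin(k,p)$. Comparing the two expressions for the law of $T$ gives the identity.

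If a purely analytic argument is preferred, I would unfold the mixture and apply the elementary identity
\begin{equation*}
\binom{n}{i}\,\frac{\binom{i}{j}\binom{n-i}{k-j}}{\binom{n}{k}}=\binom{k}{j}\binom{n-k}{i-j},
\end{equation*}
which is immediate from rewriting all factorials. This yields
\begin{equation*}
\HG(n,\Bin(n,p),k)(\{j\})=\binom{k}{j}\sum_{i=j}^{n-k+j}\binom{n-k}{i-j}p^{i}(1-p)^{n-i},
\end{equation*}
and the substitution $m=i-j$ together with the binomial theorem collapses the sum to $p^{j}(1-p)^{k-j}$, giving $\Bin(k,p)(\{j\})$.

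Neither step presents a genuine obstacle: the probabilistic proof reduces to invoking exchangeability of i.i.d.\ samples and the combinatorial description of a uniform random subset, while the algebraic proof reduces to one combinatorial identity and the binomial theorem. The only thing worth being mindful of is the trivial edge cases $k=0$ or $p\in\{0,1\}$, which are handled by the same formulas with the usual conventions.
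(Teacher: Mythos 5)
Your proposal is correct and coincides with the paper's own proof: the paper carries out exactly the same algebraic computation, resting on the identity $\binom{n}{i}\binom{i}{j}\binom{n-i}{k-j}/\binom{n}{k}=\binom{k}{j}\binom{n-k}{i-j}$ followed by the binomial theorem, and it also records your probabilistic argument (coloring $n$ balls i.i.d.\ with probability $p$ and drawing a uniform sample of $k$ of them) as an alternative justification. Both of your routes are sound, so there is nothing to add.
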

\begin{proof}
For $0\leq j\leq k$,
\begin{align*}
&\hspace{-1cm}\HG(n,\Bin(n,p),k)(\{j\})\\
&=\sum_{i=0}^{n}\binom{n}{i}p^i(1-p)^{n-i}\HG(n,i,k)(\{j\})=\sum_{i=j}^{n}\binom{n}{i}p^i(1-p)^{n-i}\frac{\binom{i}{j}\binom{n-i}{k-j}}{\binom{n}{k}}\\&=\sum_{i=j}^{n}\binom{k}{j}\binom{n-k}{i-j}p^i(1-p)^{n-i}=\binom{k}{j}p^j(1-p)^{k-j}\sum_{i=j}^{n}\binom{n-k}{i-j}p^{i-j}(1-p)^{n-k-i+j}\\
&=\binom{k}{j}p^j(1-p)^{k-j}\sum_{i=0}^{n-j}\binom{n-k}{i}p^{i}(1-p)^{n-k-i}=\binom{k}{j}p^j(1-p)^{k-j}=\Bin(k,p)(\{j\}).
\end{align*}
 Alternatively, we can argue probabilistically: If each of $n$ balls is colored black or white with probability $p$ and $1-p$, respectivelly, and then a sample of $k$ balls is drawn at random from $n$ balls, then the number of black balls in the sample has binomial distribution with parameters $(k,p)$.
\end{proof}

The subsequent proof of Theorem~\ref{thm:main1} proceeds according to the following scheme:

\vspace{2mm}
\noindent
{\sc Step 1.} We approximate $\mathcal{L}(\mathcal{P}_N)$ by an appropriate \emph{mixed} binomial distribution $\Bin(N,\mathcal{L}(W))$, where $W$ is a random variable taking values in $[0,1]$ and $\mathcal{L}(X)$ denotes the distribution of a random variable $X$.  The approximation is understood in the sense of the $d_{TV}$-distance which, as we shall show, converges to $0$.  To accomplish this step we employ the local limit theorems for $\mathcal{P}_N$ provided by Corollaries~\ref{cor:clt_llt_h_neq_0-beta-in-0-1} and~\ref{cor:clt_llt_h=0-beta>1}. It turns out that in a role of the mixing distribution $W$ we can take a beta distribution (or a mixture of two beta-distributions) with properly adjusted parameters.

\vspace{2mm}
\noindent
{\sc Step 2.} Lemma~\ref{lem:hypergeometric_binomial} implies that the $\mu_N$-distribution of $\mathcal{P}_k$ is close (in a sense of the $d_{TV}$-distance) to the mixed binomial distribution
$\Bin(k,\mathcal{L}(W))$. Formal verification of this employs the well-known characterization of the $d_{TV}$-distance
\begin{equation}\label{eq:dtv_coupling}
d_{TV}(\mathfrak{M}_1,\mathfrak{M}_2)=\inf\;\mathbb{P}\{X\neq Y\},
\end{equation}
where the infimum is taken over all pairs $(X,Y)$ of random variables such that $X$ is distributed according to $\mathfrak{M}_1$ and $Y$ is distributed according to $\mathfrak{M}_2$.

\vspace{2mm}
\noindent
{\sc Step 3.} We derive a local limit theorem for $\Bin(k,\mathcal{L}(W))$.

\vspace{2mm}
\noindent
{\sc Step 4.} We calculate the total variation distance between $\Bin(k,\mathcal{L}(W))$ and the three binomial distributions appearing in Theorem~\ref{thm:main1} by using local limit theorems for binomial distributions in conjunction with another well-known formula for $d_{TV}$: If measures $\mathfrak{M}_1$ and $\mathfrak{M}_2$ are supported on $\Z$, then
\begin{equation}\label{eq:dtv_sum}
d_{TV}(\mathfrak{M}_1,\mathfrak{M}_2)=\frac{1}{2}\sum_{k\in\mathbb{Z}}|\mathfrak{M}_1(\{k\})-\mathfrak{M}_2(\{k\})|,
\end{equation}
see Propositions~\ref{prop:diff_variance} and Proposition~\ref{prop:diff_variance_mixture} below.

Our implementation of Steps 1-3 relies on the next proposition. For $\alpha,\beta>0$, $\BETA(\alpha,\beta)$ denotes a beta distribution with the density
$$
\BETA(\alpha,\beta)({\rm d}x)=\frac{x^{\alpha-1}(1-x)^{\beta-1}{\rm d}x}{B(\alpha,\beta)}\1_{\{x\in (0,1)\}},
$$
where $B$ is the Euler beta-function. In what follows we assume that all auxiliary random variables are defined on some probability space $(\Omega,\mathcal{F},\mathbb{P})$.

\begin{proposition}\label{prop:to_zero_in_tv}
Assume that $(\Theta_N)_{N\in\mathbb{N}}$ is a sequence of $\N_0$-valued random variables that satisfy a local limit theorem of the form: for a fixed integer $K\in\mathbb{N}$, a collection of positive weights $p_1,\ldots,p_K$ satisfying $\sum_{j=1}^{K}p_j=1$, $a_j\in (0,1)$ and $\sigma^2_j\in (a_j(1-a_j),\infty)$, $j=1,\ldots,K$, it holds
\begin{equation}\label{eq:lclt_assumption}
\lim_{N\to\infty}\sqrt{N}\sup_{\ell\in\mathbb{Z}}\left|\mathbb{P}(\Theta_N=\ell)-\sum_{j=1}^{K}p_j\varphi(\ell;Na_j,N\sigma^2_j)\right|=0.
\end{equation}
Suppose that $k=k(N)$ is a sequence of positive integers such that~\eqref{eq:K_N_linear} holds for some $\alpha\in [0,1]$ and put
\begin{equation}\label{eq:gammas_def}
\gamma_{j,1}:=\frac{a_j^2(1-a_j)}{\sigma^2_j-a_j(1-a_j)},\quad \gamma_{j,2}:=\frac{a_j(1-a_j)^2}{\sigma^2_j-a_j(1-a_j)},\quad j=1,\ldots,K.
\end{equation}
For $N\in\mathbb{N}$, let $X_N$ be a random variable with a mixed hypergeometric distribution $\HG(N,\mathcal{L}(\Theta_N),k(N))$ and $Y_{N,k(N)}$ be a random variable with the mixed binomial distribution $\Bin(k(N),\sum_{j=1}^{K}p_j\BETA(\gamma_{j,1} N,\gamma_{j,2} N))$, that is,
$$
\mathbb{P}(Y_{N,k(N)}=\ell)= \int_0^1 \binom{k(N)}{\ell}x^{\ell}(1-x)^{k(N)-\ell}\left(\sum_{j=1}^{K}p_j\BETA(\gamma_{j,1} N,\gamma_{j,2} N)({\rm d} x)\right),\quad \ell=0,\ldots,k(N).
$$
Then $\lim_{N\to\infty}d_{TV}(\mathcal{L}(X_N),\mathcal{L}(Y_{N,k(N)}))=0$.
\end{proposition}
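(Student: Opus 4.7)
\emph{Reduction via data processing.} My plan is to introduce the auxiliary random variable $Z_N\sim\Bin(N,\sum_{j=1}^K p_j\BETA(\gamma_{j,1}N,\gamma_{j,2}N))$ and to show $d_{TV}(\mathcal L(\Theta_N),\mathcal L(Z_N))\to 0$, which will imply the proposition. Indeed, applying Lemma~\ref{lem:hypergeometric_binomial} conditionally on the beta mixing variable and then averaging gives $\HG(N,\mathcal L(Z_N),k(N))=\mathcal L(Y_{N,k(N)})$, while $\mathcal L(X_N)=\HG(N,\mathcal L(\Theta_N),k(N))$. I would then couple $X_N$ and $Y_{N,k(N)}$ by drawing a common uniformly random size-$k(N)$ subset of $\{1,\ldots,N\}$ together with an optimal coupling of $(\Theta_N,Z_N)$: on the event $\{\Theta_N=Z_N\}$ the two counts coincide, yielding the data-processing bound
\[
d_{TV}(\mathcal L(X_N),\mathcal L(Y_{N,k(N)}))\le d_{TV}(\mathcal L(\Theta_N),\mathcal L(Z_N)).
\]

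\emph{LCLT for $Z_N$.} The next step is to establish a local CLT for $Z_N$. The parameters $\gamma_{j,1},\gamma_{j,2}$ in~\eqref{eq:gammas_def} are calibrated precisely so that, for $W\sim\BETA(\gamma_{j,1}N,\gamma_{j,2}N)$, the beta-binomial $Z_{N,j}:=\Bin(N,W)$ has mean $Na_j$ and variance $N\sigma_j^2$ (a direct moment computation via $\operatorname{Var}(Z_{N,j})=N\E[W(1-W)]+N^2\operatorname{Var}(W)$, using $\sigma_j^2>a_j(1-a_j)$ to ensure $\gamma_{j,1},\gamma_{j,2}>0$). I would then write the beta-binomial pmf as a ratio of Gamma functions, apply Stirling uniformly on the window $|\ell-Na_j|\le\sqrt{N\log N}$, and bound the complement by Chernoff-type estimates, to arrive at
\[
\sup_{\ell\in\Z}\sqrt N\bigl|\P(Z_{N,j}=\ell)-\varphi(\ell;Na_j,N\sigma_j^2)\bigr|\to 0.
\]
Taking the convex combination with weights $p_j$ yields the analogous LCLT for $Z_N$ itself, with approximant $\varphi_N(\ell):=\sum_{j=1}^K p_j\varphi(\ell;Na_j,N\sigma_j^2)$ exactly equal to the one in the assumption~\eqref{eq:lclt_assumption} on $\Theta_N$.

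\emph{From LCLT to total variation.} The triangle inequality will give $\eta_N:=\sqrt N\sup_\ell|\P(\Theta_N=\ell)-\P(Z_N=\ell)|\to 0$. For fixed $M>0$ I would set $A_M^N:=\bigcup_{j=1}^K[Na_j-M\sqrt N,Na_j+M\sqrt N]\cap\Z$, of cardinality $O(M\sqrt N)$. A Riemann-sum estimate gives $\sum_{\ell\in A_M^N}\varphi_N(\ell)\to 1-\eta(M)$ with $\eta(M)\to 0$ as $M\to\infty$; combining this with the LCLTs and the identities $\sum_\ell\P(\Theta_N=\ell)=\sum_\ell\P(Z_N=\ell)=1$ upgrades the LCLTs to tightness on the $\sqrt N$-scale, namely $\limsup_N\P(\Theta_N\notin A_M^N)\le\eta(M)$ and similarly for $Z_N$ (alternatively a Chebyshev bound based on $\operatorname{Var}(Z_N)=O(N)$ would do). The decomposition
\[
2 d_{TV}(\mathcal L(\Theta_N),\mathcal L(Z_N))\le|A_M^N|\frac{\eta_N}{\sqrt N}+\P(\Theta_N\notin A_M^N)+\P(Z_N\notin A_M^N),
\]
on letting first $N\to\infty$ and then $M\to\infty$, will close the argument. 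The main technical obstacle should be the beta-binomial LCLT in Step~2: it requires a uniform Stirling expansion on a window growing like $\sqrt{N\log N}$ and separate handling of the tails, whereas the coupling reduction and the Riemann-sum tightness step are essentially soft once that LCLT is available.
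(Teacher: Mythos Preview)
Your approach is correct and matches the paper's: reduce to $d_{TV}(\mathcal L(\Theta_N),\mathcal L(Y_{N,N}))\to 0$ via the coupling/data-processing argument together with Lemma~\ref{lem:hypergeometric_binomial}, prove a local limit theorem for the beta-binomial $Y_{N,N}$ via Stirling, and conclude by the soft fact that two sequences satisfying the same local limit theorem are close in total variation (which the paper packages separately as Proposition~\ref{prop:diff_variance_mixture}). One point worth noting: inside this proof the paper actually establishes the more general local limit theorem~\eqref{eq:lclt_Y_Nk} for $Y_{N,k(N)}$ with arbitrary $k(N)$ satisfying~\eqref{eq:K_N_linear}, because that is needed later in the proof of Theorem~\ref{thm:main1}; your sketch only handles the case $k(N)=N$, which is all that is required for the proposition itself.
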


\begin{remark}
We shall use this proposition with $\Theta_N=\mathcal{P}_N$ and $K=1$, $p_1=1$ (in conjunction with Corollary~\ref{cor:clt_llt_h_neq_0-beta-in-0-1}) or $K=2$, $p_1=p_2=1/2$ (in conjunction with Corollary~\ref{cor:clt_llt_h=0-beta>1}).
\end{remark}
\begin{remark}\label{rem:blow_up_variance}
Let us provide an informal explanation of Proposition~\ref{prop:to_zero_in_tv}. Consider for simplicity the case $K=1$, $p=1$. Then, \eqref{eq:lclt_assumption} states that $\Theta_N$ satisfies a local  limit theorem with asymptotic centering $Na_1$. One is therefore tempted to approximate $\Theta_N$ by the binomial distribution $\Bin (N, a_1)$, however the variance of this distribution, which equals $Na_1 (1-a_1)$, is strictly smaller than the asymptotic variance $N\sigma_1^2$ appearing in~\eqref{eq:lclt_assumption} due to the assumption $\sigma_1^2 > a_1(1-a_1)$. Instead, we approximate $\Theta_N$ by a \emph{mixed} binomial distribution that artificially blows up the variance until the variances  match.  The informal explanation of this is that both distributions satisfy a local limit theorem with the same centering and normalization.
In the case $k(N) = N$, we have $\Theta_N\eqdistr X_N$ and Proposition~\ref{prop:to_zero_in_tv} states that the total variation distance between the distribution of $X_N$ and $\Bin(N,\BETA(\gamma_{1,1} N,\gamma_{1,2} N))$ converges to $0$.
Moreover, in the formal limit $\sigma_1^2\downarrow a_1(1-a_1)$, we retrieve Lemma \ref{lem:hypergeometric_binomial}.
\end{remark}

\begin{proof}[Proof of Proposition~\ref{prop:to_zero_in_tv}]
We start by noting that, for fixed $j=1,\ldots,K$, the pair $(\gamma_{j,1},\gamma_{j,2})$ is a unique solution to the following system of equations
\begin{equation}\label{eq:system}
		\begin{cases}
		N\frac{\gamma_{j,1}}{\gamma_{j,1}+\gamma_{j,2}}=Na_j,\\
		N\left(\frac{\gamma_{j,1}\gamma_{j,2}}{(\gamma_{j,1}+\gamma_{j,2})^2}+\frac{\gamma_{j,1}\gamma_{j,2}}{(\gamma_{j,1}+\gamma_{j,2})^3}\right)=N\sigma^2_j.
		\end{cases}
\end{equation}
On the left-hand side of the first equation we recognize the mean of $Y_{N,N}$, whereas the left-hand side of the second equation is equal to the variance of $Y_{N,N}$ up to a term $O(1)$. This suggests that the distribution of $\Theta_N$ is close to the distribution of $Y_{N,N}$. In fact, assume that we have proved
\begin{equation}\label{eq:prop_eq1}
\lim_{N\to\infty}d_{TV}(\mathcal{L}(\Theta_N),\mathcal{L}(Y_{N,N}))=0.
\end{equation}
According to~\eqref{eq:dtv_coupling} there exists a sequence of pairs $(\Theta'_N,Y'_{N,N})$ such that
$$
\lim_{N\to\infty}\mathbb{P}(\Theta'_N\neq Y'_{N,N})=0
$$
and $\Theta'_N$ (respectively, $Y'_{N,N}$) has the same distribution as $\Theta_N$ (respectively, $Y_{N,N}$), for every $N\in\mathbb{N}$. Therefore,
\begin{align*}
&d_{TV}(\mathcal{L}(X_N),\HG(N,\mathcal{L}(Y_{N,N}),k(N)))\\
=&d_{TV}(\HG(N,\mathcal{L}(\Theta_N),k(N)),\HG(N,\mathcal{L}(Y_{N,N}),k(N)))\\
=&d_{TV}(\HG(N,\mathcal{L}(\Theta'_N),k(N)),\HG(N,\mathcal{L}(Y'_{N,N}),k(N))) \to 0.
\end{align*}
But this immediately yields the claim since $\HG(N,\mathcal{L}(Y_{N,N}),k(N))$ has the same distribution as $Y_{N,k(N)}$ by Lemma~\ref{lem:hypergeometric_binomial}. Thus, it remains to prove~\eqref{eq:prop_eq1}.

To this end, we shall check that $Y_{N,N}$ satisfies exactly the same local limit theorem as $\Theta_N$, that is,
\begin{equation}\label{eq:lclt_Y_NN}
\lim_{N\to\infty}\sqrt{N}\sup_{\ell\in\mathbb{Z}}\left|\mathbb{P}(Y_{N,N}=\ell)-\sum_{j=1}^{K}p_j\varphi(\ell;Na_j,N\sigma^2_j)\right|=0.
\end{equation}
We shall actually prove a stronger result for later considerations, namely
\begin{equation}\label{eq:lclt_Y_Nk}
\lim_{N\to\infty}\sup_{\ell\in\mathbb{Z}}\left|\sqrt{k(N)}\mathbb{P}(Y_{N,k(N)}=\ell)-\sum_{j=1}^{K}p_j\varphi(\ell;k(N)a_j,k(N)\sigma^2_{\alpha,j})\right|=0,
\end{equation}
where
\begin{equation}\label{eq:variance_alpha}
\sigma^2_{\alpha,j}:=\frac{\gamma_{j,1} \gamma_{j,2}}{(\gamma_{j,1}+\gamma_{j,2})^2}+\frac{\alpha\gamma_{j,1} \gamma_{j,2}}{(\gamma_{j,1}+\gamma_{j,2})^3}.
\end{equation}
Relation~\eqref{eq:lclt_Y_NN} follows from~\eqref{eq:lclt_Y_Nk} upon setting $k(N)=N$. The intuitive fact that~\eqref{eq:lclt_Y_NN} and~\eqref{eq:lclt_assumption} imply~\eqref{eq:prop_eq1} follows from Proposition~\ref{prop:diff_variance_mixture} below.

By the very definition of $Y_{N,k(N)}$ it suffices to prove~\eqref{eq:lclt_Y_Nk} only in the case $K=1$, $p_1=1$ and $j=1$. We find it instructive to first prove a central limit theorem for $Y_{N,k(N)}$ of the form
\begin{equation}\label{eq:Y_n_clt}
\frac{Y_{N,k(N)}-a_1 k(N)}{\sqrt{k(N)}}~\todistr~\Normal(0,\sigma^2_{\alpha,1}),
\end{equation}
which explains the formula for the variance $\sigma^2_{\alpha,1}$ in~\eqref{eq:lclt_Y_Nk}.
Let $B_{N}$ be a random variable with the beta-distribution $\BETA(\gamma_{1,1} N, \gamma_{1,2} N)$ and $C_{N}^{(1)}$, $C_{N}^{(2)}$ be independent random variables with gamma-distributions with parameters $(\gamma_{1,1} N,1)$ and $(\gamma_{1,2} N,1)$, respectively.
Using a representation
$$
B_N\eqdistr \frac{C_N^{(1)}}{C_N^{(1)}+C_N^{(2)}},
$$
for the beta-distribution, see~\cite[Theorem 3]{Jambunathan:1964}, and the central limit theorem for $C_N^{(1)}$ and $C_N^{(2)}$, it is easy to check that
\begin{equation}\label{eq:CLT_B_N}
\sqrt{N}\left(B_N-\frac{\gamma_{1,1}}{\gamma_{1,1}+\gamma_{1,2}}\right)~\todistr~\Normal(0,{\tt s}^2),\quad N\to\infty,
\end{equation}
where ${\tt s}^2:=\frac{\gamma_{1,1}\gamma_{1,2}}{(\gamma_{1,1}+\gamma_{1,2})^3}$. Moreover, let $(U_k)_{k\in\mathbb{N}}$ be a sequence of independent copies of a random variable with the uniform distribution on $[0,1]$. It is well-known that
\begin{equation}\label{eq:flt_bridge}
\left(\sqrt{N}\left(\frac{1}{N}\sum_{i=1}^{N}\1_{\{U_i\leq t\}}-t\right)\right)_{t\in [0,1]}~\Longrightarrow~ (W(t))_{t\in [0,1]},\quad N\to\infty,
\end{equation}
in the Skorokhod $J_1$-topology on $D[0,1]$, where $(W(t))_{t\in [0,1]}$ is a standard Brownian bridge.
By independence and our assumption $k(N)/N~\to~\alpha$, we have a joint convergence
\begin{multline}\label{eq:joint}
\left(\left(\sqrt{k(N)}\left(\frac{1}{k(N)}\sum_{i=1}^{k(N)}\1_{\{U_i\leq t\}}-t\right)\right)_{t\in [0,1]},B_N,\sqrt{k(N)}\left(B_N-a_1\right)\right)\\
~\Longrightarrow~ ((W(t))_{t\in [0,1]},a_1,\Normal(0,\alpha {\tt s}^2)),\quad n\to\infty,
\end{multline}
on $D[0,1]\times \R\times \R$ endowed with the product topology. Applying the map
$$
D[0,1]\times \R\times \R\ni (f(\cdot),x,y)\longmapsto f(x)+y\in \R,
$$
which is a.s.~continuous at the limiting point in~\eqref{eq:joint} we arrive at
$$
\frac{Y_{N,k(N)}-a_1 k(N)}{\sqrt{k(N)}}~\todistr~W(a_1)+\Normal(0,\alpha {\tt s}^2),\quad N\to\infty,
$$
which is equivalent to~\eqref{eq:Y_n_clt}, since ${\rm Var}\,(W(a_1))=a_1(1-a_1)$. Fix $\varepsilon\in (0,\min(a_1,1-a_1))$ and note that
\begin{align*}
\mathbb{P}(|Y_{N,k(N)}-a_1 k(N)|\geq \varepsilon k(N))&=\mathbb{P}(|Y_{N,k(N)}-\mathbb{E} Y_{N,k(N)}|\geq \varepsilon k(N))\\
&\leq \frac{{\rm Var}(Y_{N,k(N)})}{\varepsilon^2 (k(N))^2}\\
&=\frac{k(N)\E B_N(1-B_N)+N^{-1}k^2(N){\rm Var}(\sqrt{N}B_N)}{\varepsilon^2 (k(N))^2}=O((k(N)^{-1}),
\end{align*}
where the penultimate equality follows from the law of total variance, and the last equality is a consequence of $\lim_{N\to\infty}{\rm Var}(\sqrt{N}B_N)={\tt s}^2$; see~\eqref{eq:CLT_B_N}. The above estimate in conjunction with a standard tail estimate for the normal law implies that~\eqref{eq:lclt_Y_Nk} is equivalent to
\begin{equation}\label{eq:Y_n_lclt}
\sqrt{k(N)}\sup_{\ell:\,|\ell-a_1 k(N)|\leq \varepsilon k(N)}\left|\mathbb{P}(Y_{N,k(N)}=\ell)-\varphi(\ell;a_1 k(N),\sigma_{\alpha,1}^2 k(N))\right|~\longrightarrow~ 0,\quad N\to\infty.
\end{equation}
The latter can be deduced from the following explicit formula
\begin{align}\label{eq:beta_binomial_explicit}
\mathbb{P}(Y_{N,k(N)}=\ell)&=\binom{k(N)}{\ell}\frac{B(\ell+\gamma_{1,1} N,k(N)-\ell+\gamma_{1,2}N)}{B(\gamma_{1,1} N,\gamma_{1,2} N)}\\
&=\frac{k(N)+1}{B(\ell+1,k(N)-\ell+1)}\frac{B(\ell+\gamma_{1,1} N,k(N)-\ell+\gamma_{1,2}N)}{B(\gamma_{1,1} N,\gamma_{1,2} N)},\quad \ell=0,\ldots,k(N),
\end{align}
together with the asymptotic relation
\begin{equation}\label{eq:beta_uniform}
B(k(N)x,k(N)y)~=~(1+o(1))\sqrt{\frac{2\pi}{k(N)}}\sqrt{\frac{x+y}{xy}}\left(\frac{x^x y^y}{(x+y)^{x+y}}\right)^{k(N)},\quad N\to\infty,
\end{equation}	
for the beta-function, which is uniform in $x,y\in [\delta,\delta^{-1}]$, for every fixed $\delta\in (0,1)$. Out choice of $\varepsilon$ ensures that all the arguments of the beta-functions in~\eqref{eq:beta_binomial_explicit} lie in $[\delta k(N),\delta^{-1}k(N)]$ for a sufficiently small $\delta>0$. The uniform asymptotic relation~\eqref{eq:beta_uniform} is a consequence of Stirling's formula with a uniform estimate of the remainder; see, for example, Eq.~(5.11.10) and~(5.11.11) in~\cite{olver2010nist}. The proof of Proposition~\ref{prop:to_zero_in_tv} is complete.
\end{proof}

By combining \eqref{eq:distribution_Pk} and Corollary~\ref{cor:clt_llt_h_neq_0-beta-in-0-1} with Proposition~\ref{prop:to_zero_in_tv} applied for $\Theta_N=\mathcal P_N$, $K=1$, $a_1=(1+{\tt m}(\beta,h))/2$ and $\sigma_1^2:={\tt v}^2_{\beta,h}/4$, we obtain the next theorem. Note that $4a_1(1-a_1)=1-{\tt m}^2_{\beta,h}<{\tt v}^2_{\beta,h}=4\sigma_1^2$ in view of~\eqref{eq:v_beta_h_def}. Also some simple algebra yields
\begin{equation}\label{eq:gammas_curie_weiss}
\gamma_1(\beta,h):=\gamma_{1,1}=\frac{1-\beta(1-{\tt m}^2(\beta,h))}{2\beta(1-{\tt m}(\beta,h))},\quad \gamma_2(\beta,h):=\gamma_{1,2}=\frac{1-\beta(1-{\tt m}^2(\beta,h))}{2\beta(1+{\tt m}(\beta,h))}.
\end{equation}

\begin{theorem}\label{thm:P_Nk_approx_h_neq_0-beta-in-0-1}
Assume that $h\neq 0$ or $0<\beta<1$. Suppose that $k=k(N)$ satisfies~\eqref{eq:K_N_linear}. Then
$$
\lim_{N\to\infty}d_{TV}\left(\mu_N\circ\mathcal{P}_{k(N)}^{-1},\Bin\left(k(N),\BETA\left(\gamma_1(\beta,h)N,\gamma_2(\beta,h)N\right)\right)\right)=0,
$$
where $\gamma_1(\beta,h)$ and $\gamma_2(\beta,h)$ are given in~\eqref{eq:gammas_curie_weiss}.
\end{theorem}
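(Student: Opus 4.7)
My plan is to deduce the theorem directly from Proposition~\ref{prop:to_zero_in_tv}, which has been crafted for exactly this situation. The representation~\eqref{eq:distribution_Pk} says
\[
\mu_N\circ\mathcal{P}_{k(N)}^{-1} = \HG(N,\mathcal{L}(\mathcal{P}_N),k(N)),
\]
so $\mathcal{P}_{k(N)}$ already plays the role of the random variable $X_N$ appearing in Proposition~\ref{prop:to_zero_in_tv} with the choice $\Theta_N = \mathcal{P}_N$. It therefore suffices to verify the hypotheses of that proposition with $K=1$, $p_1=1$, $a_1 = (1+{\tt m}(\beta,h))/2$, and $\sigma_1^2 = {\tt v}^2_{\beta,h}/4$, and then to match the resulting beta parameters with the claimed $\gamma_1(\beta,h)$ and $\gamma_2(\beta,h)$.

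First, Corollary~\ref{cor:clt_llt_h_neq_0-beta-in-0-1} supplies the local limit theorem~\eqref{eq:lclt_assumption} with precisely these constants. The only nontrivial point to check is the strict variance inequality $\sigma_1^2 > a_1(1-a_1)$, which reduces to ${\tt v}^2_{\beta,h} > 1-{\tt m}^2(\beta,h)$; by the definition~\eqref{eq:v_beta_h_def} this is equivalent to $0 < \beta(1-{\tt m}^2(\beta,h)) < 1$, a fact highlighted in the introduction for every $(\beta,h)\neq (1,0)$, and in particular throughout the regime $h\neq 0$ or $0<\beta<1$ considered here.

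Next, I would translate the abstract parameters $\gamma_{1,1}$ and $\gamma_{1,2}$ of~\eqref{eq:gammas_def} into the concrete expressions~\eqref{eq:gammas_curie_weiss}. The key intermediate identity is
\[
\sigma_1^2 - a_1(1-a_1) = \frac{{\tt v}^2_{\beta,h}-(1-{\tt m}^2(\beta,h))}{4} = \frac{\beta(1-{\tt m}^2(\beta,h))^2}{4(1-\beta(1-{\tt m}^2(\beta,h)))},
\]
which is immediate from~\eqref{eq:v_beta_h_def}. Combined with $a_1^2(1-a_1) = (1+{\tt m}(\beta,h))^2(1-{\tt m}(\beta,h))/8$ and the analogous expression for $a_1(1-a_1)^2$, the factor $(1-{\tt m}^2(\beta,h))^2=(1-{\tt m}(\beta,h))^2(1+{\tt m}(\beta,h))^2$ cancels cleanly and yields the two formulas in~\eqref{eq:gammas_curie_weiss}. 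At that point Proposition~\ref{prop:to_zero_in_tv} delivers the claimed vanishing of the total variation distance.

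I do not anticipate any genuine obstacle: all of the analytic heavy lifting -- the Stirling expansion of the beta function, the coupling characterization~\eqref{eq:dtv_coupling}, and the resulting local limit theorem for mixed binomials -- is already packaged inside Proposition~\ref{prop:to_zero_in_tv}. The only ingredients on top are the variance check above and the routine algebra turning~\eqref{eq:gammas_def} into~\eqref{eq:gammas_curie_weiss}. The most error-prone step, and the one I would double-check most carefully, is this parameter identification, since a sign or exponent slip there would corrupt the two beta parameters and hence the statement of the theorem.
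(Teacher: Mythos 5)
Your proposal is correct and follows exactly the paper's own route: identify $\mu_N\circ\mathcal{P}_{k(N)}^{-1}$ with $\HG(N,\mathcal{L}(\mathcal{P}_N),k(N))$ via~\eqref{eq:distribution_Pk}, feed Corollary~\ref{cor:clt_llt_h_neq_0-beta-in-0-1} into Proposition~\ref{prop:to_zero_in_tv} with $K=1$, $a_1=(1+{\tt m}(\beta,h))/2$, $\sigma_1^2={\tt v}^2_{\beta,h}/4$, check $\sigma_1^2>a_1(1-a_1)$ from~\eqref{eq:v_beta_h_def}, and compute $\gamma_{1,1},\gamma_{1,2}$ from~\eqref{eq:gammas_def} to recover~\eqref{eq:gammas_curie_weiss}. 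Your intermediate identity and the resulting parameter identification are exactly the ``simple algebra'' the paper invokes, so there is nothing missing.
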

\begin{remark} The theorem above describes the residual dependence in the propagation of chaos phenomenon via a beta-binomial distribution, which in turn has a simple interpretation as follows.
Consider a P\'{o}lya urn which initially contains $\gamma_1(\beta,h)N$ positive spins (white balls) and $\gamma_2(\beta,h)N$ negative spins (black balls). Balls are drawn one at a
time and immediately returned to the urn together with a new ball of the same color. The number of white balls drawn after $k(N)$ trials has the beta-binomial distribution $
\Bin\left(k(N),\BETA\left(\gamma_1(\beta,h)N,\gamma_2(\beta,h)N\right)\right)$. Thus, Theorem~\ref{thm:P_Nk_approx_h_neq_0-beta-in-0-1} tells us that the number of positive spins
under the Gibbs measure is close in distribution to the number of white balls drawn from the above P\'{o}lya urn after $k(N)$ trials.
\end{remark}

Similarly, Corollary~\ref{cor:clt_llt_h=0-beta>1} and Proposition~\ref{prop:to_zero_in_tv} applied with $K=2$, $p_1=p_2=1/2$ yield the next result.

\begin{theorem}\label{thm:P_Nk_approx_h=0-beta>1}
Assume that $h=0$ and $\beta>1$. Suppose that $k=k(N)$ satisfies~\eqref{eq:K_N_linear}. Then
$$
\lim_{N\to\infty}d_{TV}\left(\mu_N\circ\mathcal{P}_{k(N)}^{-1},\Bin\left(k(N),\frac{1}{2}\left(\BETA(\gamma_1(\beta,h)N,\gamma_2(\beta,h)N)+\BETA(\gamma_2(\beta,h)N,\gamma_1(\beta,h)N)\right)\right)\right)=0.
$$
\end{theorem}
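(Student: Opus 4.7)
The plan is to apply Proposition~\ref{prop:to_zero_in_tv} in exactly the same way as was done for Theorem~\ref{thm:P_Nk_approx_h_neq_0-beta-in-0-1}, only now with two atoms in the mixture. Specifically, I take $\Theta_N=\mathcal{P}_N$ and read off from Corollary~\ref{cor:clt_llt_h=0-beta>1} that the hypothesis~\eqref{eq:lclt_assumption} is satisfied with $K=2$, $p_1=p_2=1/2$,
$$
a_1=\frac{1+{\tt m}(\beta,0)}{2},\quad a_2=\frac{1-{\tt m}(\beta,0)}{2},\quad \sigma_1^2=\sigma_2^2=\frac{{\tt v}^2_{\beta,0}}{4}.
$$
The required positivity $\sigma_j^2>a_j(1-a_j)$ is equivalent to ${\tt v}^2_{\beta,0}>1-{\tt m}^2(\beta,0)$, which follows immediately from the definition~\eqref{eq:v_beta_h_def} together with the fact that $\beta(1-{\tt m}^2(\beta,0))\in(0,1)$ for $\beta>1$.

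Combining~\eqref{eq:distribution_Pk} with Proposition~\ref{prop:to_zero_in_tv} then yields
$$
\lim_{N\to\infty}d_{TV}\!\left(\mu_N\circ\mathcal{P}_{k(N)}^{-1},\,\Bin\!\left(k(N),\tfrac12\BETA(\gamma_{1,1}N,\gamma_{1,2}N)+\tfrac12\BETA(\gamma_{2,1}N,\gamma_{2,2}N)\right)\right)=0,
$$
where $\gamma_{j,1},\gamma_{j,2}$ are defined by~\eqref{eq:gammas_def}. All that remains is to identify these four parameters with the two numbers $\gamma_1(\beta,0)$ and $\gamma_2(\beta,0)$ from~\eqref{eq:gammas_curie_weiss}.

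For the first atom, I insert $a_1=(1+{\tt m}(\beta,0))/2$ and $\sigma_1^2={\tt v}^2_{\beta,0}/4$ into~\eqref{eq:gammas_def}, writing the common denominator as
$$
\sigma_1^2-a_1(1-a_1)=\tfrac14\bigl({\tt v}^2_{\beta,0}-(1-{\tt m}^2(\beta,0))\bigr)=\frac{\beta(1-{\tt m}^2(\beta,0))^2}{4(1-\beta(1-{\tt m}^2(\beta,0)))}
$$
by~\eqref{eq:v_beta_h_def}. A short simplification in which the factor $1-{\tt m}^2(\beta,0)=(1-{\tt m}(\beta,0))(1+{\tt m}(\beta,0))$ cancels appropriately gives $\gamma_{1,1}=\gamma_1(\beta,0)$ and $\gamma_{1,2}=\gamma_2(\beta,0)$ exactly as in~\eqref{eq:gammas_curie_weiss}. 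Since $a_2$ is obtained from $a_1$ by the substitution ${\tt m}(\beta,0)\mapsto -{\tt m}(\beta,0)$, which interchanges the expressions~\eqref{eq:gammas_curie_weiss} for $\gamma_1(\beta,0)$ and $\gamma_2(\beta,0)$, the second atom produces $\gamma_{2,1}=\gamma_2(\beta,0)$ and $\gamma_{2,2}=\gamma_1(\beta,0)$, completing the matching.

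I do not expect a real obstacle: the heavy analytic lifting is entirely contained in Proposition~\ref{prop:to_zero_in_tv}, which in turn relies on the local limit theorem from Corollary~\ref{cor:clt_llt_h=0-beta>1}. The only task specific to this theorem is the algebraic identification of the $\gamma$-parameters, and the relevant computation is the one already used for Theorem~\ref{thm:P_Nk_approx_h_neq_0-beta-in-0-1}, applied separately to each of the two symmetric components of the mixture.
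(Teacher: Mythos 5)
Your proposal is correct and is essentially the paper's own argument: the paper derives this theorem in one line by applying Proposition~\ref{prop:to_zero_in_tv} with $\Theta_N=\mathcal{P}_N$, $K=2$, $p_1=p_2=1/2$ together with Corollary~\ref{cor:clt_llt_h=0-beta>1} and the representation~\eqref{eq:distribution_Pk}, exactly as you do. Your explicit parameter matching ($a_1=(1+{\tt m})/2$, $a_2=(1-{\tt m})/2$, $\sigma_j^2={\tt v}^2_{\beta,0}/4$, yielding $\gamma_{1,1}=\gamma_{2,2}^{\phantom{1}}$ and $\gamma_{1,2}=\gamma_{2,1}$ equal to the two values in~\eqref{eq:gammas_curie_weiss} up to the symmetric swap ${\tt m}\mapsto-{\tt m}$) checks out and simply fills in the algebra the paper leaves implicit.
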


\begin{proof}[Proof of Theorem~\ref{thm:main1}]
We shall treat three cases separately.

\vspace{2mm}
\noindent
{\sc Case $h=0$ and $\beta\in (0,1)$.} In this case ${\tt m}(\beta,h)={\tt m}(\beta,0)=0$ and
$\gamma_1(\beta,0)=\gamma_2(\beta,0)=(1-\beta)/(2\beta)$. In view of Theorem~\ref{thm:P_Nk_approx_h_neq_0-beta-in-0-1} it suffices to check that
\begin{equation}\label{eq:thm_proof_claim1}
\lim_{N\to\infty}d_{TV}\left(\Bin\left(k(N),\BETA\left(\gamma_1(\beta,h)N,\gamma_2(\beta,h)N\right)\right),\Bin(k(N),1/2)\right)=D\left(\frac{1-\beta+\alpha\beta}{4(1-\beta)},\frac{1}{4}\right).
\end{equation}
From~\eqref{eq:lclt_Y_Nk} we infer
$$
\lim_{N\to\infty}\sqrt{k(N)}\sup_{\ell\in\N}\left|\Bin\left(k(N),\BETA\left(\gamma_1(\beta,h)N,\gamma_2(\beta,h)N\right)\right)(\{\ell\})-\varphi\left(\ell;k(N)/2,\frac{1-\beta+\alpha\beta}{4(1-\beta)}k(N)\right)\right|=0,
$$
where the formula for the limiting variance follows from~\eqref{eq:variance_alpha}. By the classical de Moivre-Laplace local limit theorem
$$
\lim_{N\to\infty}\sqrt{k(N)}\sup_{\ell\in\N}\left|{\rm Bin}\left(k(N),1/2\right)(\{\ell\})-\varphi\left(\ell;k(N)/2,k(N)/4\right)\right|=0.
$$
Equality~\eqref{eq:thm_proof_claim1} now follows from Proposition~\ref{prop:diff_variance} below.

\vspace{2mm}
\noindent
{\sc Case $h=0$ and $\beta>1$.} By the same reasoning as in the previous case, but applying Theorem~\ref{thm:P_Nk_approx_h=0-beta>1} instead of Theorem~\ref{thm:P_Nk_approx_h_neq_0-beta-in-0-1}, we see that it suffices to prove
\begin{align}\label{eq:thm_proof_claim2}
&\hspace{-1cm}\lim_{N\to\infty}d_{TV}\Big(\Bin\left(k(N),\frac{1}{2}\left(\BETA(\gamma_1(\beta,0)N,\gamma_2(\beta,0)N)+\BETA(\gamma_2(\beta,0)N,\gamma_1(\beta,0)N)\right)\right),\notag\\
&\hspace{6cm}\Bin\left(k(N),\frac{1}{2}(\delta_{(1+{\tt m}(\beta,0))/2}+\delta_{(1-{\tt m}(\beta,0))/2})\right)\Big)\notag\\
&=D\left(\frac{1-{\tt m}^2(\beta,0)}{4}\left(1+\frac{\alpha\beta(1-{\tt m}^2(\beta,0))}{1-\beta(1-{\tt m}^2(\beta,0))}\right),\frac{1-{\tt m}^2(\beta,0)}{4}\right).
\end{align}
This again follows from~\eqref{eq:lclt_Y_Nk} but now with $K=2$, $p_1=p_2=1/2$ and the de Moivre-Laplace local limit theorem in conjunction with Proposition~\ref{prop:diff_variance_mixture} below. Put
$$
\mathfrak{s}^2(\beta,h):=\frac{1-{\tt m}^2(\beta,h)}{4}\left(1+\frac{\alpha\beta(1-{\tt m}^2(\beta,h))}{1-\beta(1-{\tt m}^2(\beta,h))}\right),
$$
and note that in~\eqref{eq:lclt_Y_Nk} the variances coincide:
$$
\sigma_{\alpha,1}^2=\sigma_{\alpha,2}^2=\mathfrak{s}^2(\beta,0).
$$
The second equality here follows upon substituting~\eqref{eq:gammas_curie_weiss} into~\eqref{eq:variance_alpha} and using that $\gamma_{2,1}=\gamma_{1,2}=\gamma_1(\beta,0)$, $\gamma_{1,1}=\gamma_{2,2}=\gamma_2(\beta,0)$ in this case.

\vspace{2mm}
\noindent
{\sc Case $h\neq 0$ and $\beta>0$.} In this case we again apply Theorem~\ref{thm:P_Nk_approx_h_neq_0-beta-in-0-1}. From~\eqref{eq:lclt_Y_Nk} it follows that
\begin{multline*}
\lim_{N\to\infty}\sqrt{k(N)}\sup_{\ell\in\N}\left|\Bin\left(k(N),\BETA\left(\gamma_1(\beta,h)N,\gamma_2(\beta,h)N\right)\right)(\{\ell\})-\varphi\left(\ell;\frac{1+{\tt m}(\beta,h)}{2}k(N),\mathfrak{s}^2(\beta,h)k(N)\right)\right|\\
=0.
\end{multline*}
Combining this with the de Moivre-Laplace local limit theorem and using Proposition~\ref{prop:diff_variance} we arrive at~\eqref{eq:thm1_claim3}.
\end{proof}

\begin{remark}
Here we give a comparison of our findings with some known results on the uniform distribution on a high-dimensional sphere, which bears some similarities with the Curie-Weiss model in the context of the propagation of chaos phenomenon.
For every $N\in \N$, consider a random vector $\xi_N = (\xi_{1;N},\ldots, \xi_{N;N})$ which is uniformly distributed on the sphere $\sqrt{N} \mathbb S^{N-1}$ of radius $\sqrt N$ in $\R^N$. It is a classical result of Maxwell-Poincar\'{e}-Borel that, for every $k\in \N$, the distribution of any $k$ components of $\xi_N$ converges weakly to a $k$-dimensional standard normal distribution, as $N\to\infty$. Moreover, if $k=k(N)$ depends on $N$ such that $k(N)/N\to 0$, then it has been shown in~\cite[Section~2]{diaconis_freedman:1987} that the total variation distance between the distribution of $(\xi_{1;n},\ldots, \xi_{k; N})$ and the standard normal distribution on $\mathbb R^{k}$ converges to $0$. On the other hand, for $k(N)/N\to \alpha$ with $\alpha \neq 0$, the total variation distance between these distributions converges to a non-zero limit which has been identified in~\cite[Theorem~1.6~(b)]{diaconis_freedman:1988}; see also Eq.~(2.12) on p.~403 in~\cite{diaconis_freedman:1987}. All these results are similar to what we know about the Curie-Weiss model. There is, however, one important difference: An approximation by a variance mixture of normal distributions is not possible if $\alpha >0$ in the setting of $\xi_N$. A much more general result has been shown in~\cite[Theorem~2.3~(b)]{diaconis_freedman:1988}. Let us give a short informal argument.  Let $(\zeta_1,\ldots, \zeta_{k})$ be a random vector with the standard normal distribution on $\mathbb R^{k}$ and let $R>0$ be a mixing variable independent of the $\zeta_i$'s. We ask whether it is possible to approximate $(\xi_{1;N},\ldots, \xi_{k;N})$ by $R\cdot (\zeta_1,\ldots, \zeta_k)$ in the total variation distance. By rotation invariance, it suffices to consider the distance between the squared radial parts; see~ Eq.~(2.4) on p.~402 in~\cite{diaconis_freedman:1987}. Since $\frac 1N (\xi_{1;N}^2 + \cdots + \xi_{k;N}^2)$ is beta-distributed with parameters $(k/2,(N-k)/2)$, it follows that
$$
\mathbb E (\xi_{1;N}^2 + \cdots + \xi_{k;N}^2) = k,
\qquad
\mathrm{Var} (\xi_{1;N}^2 + \cdots + \xi_{k;N}^2) = \frac{k(N-k)/4}{(N/2)^2 (N/2 + 1)} N^2 \sim 2 \alpha (1-\alpha) N.
$$
On the other hand, the squared radial part of $R(\zeta_1,\ldots, \zeta_{k})$ is $R^2\chi_k^2$, where $\chi_k^2:= \zeta_1^2 + \ldots + \zeta_k^2$ has a chi-square distribution with $k$ degrees of freedom, and we have
$$
\mathbb E (R^2 (\zeta_{1}^2 + \cdots + \zeta_{k}^2)) = \E (R^2) k,
\qquad
\mathrm{Var} (R^2 (\zeta_{1}^2 + \cdots + \zeta_{k}^2)) = \E (R^4)(2k+k^2) - (\E (R^2) k)^2. 
$$
where we used that $\E (\chi_k^4)= 2k+k^2$. If we want to match expectations, we need $\E(R^2) = 1$, but then $\E (R^4)(2k+k^2) - (\E (R^2) k)^2 \geq 2k + k^2 - k^2 = 2k \sim 2\alpha N$,  and we cannot match the variances since $2\alpha > 2\alpha (1-\alpha)$. This is in sharp contrast to the case of the Curie-Weiss model, for which Theorem \ref{thm:P_Nk_approx_h_neq_0-beta-in-0-1} shows that the larger variance of $\mathcal P_{k(N)}$ can be artificially matched by a mixed binomial distribution.

\end{remark}

\section{Proof of Proposition~\ref{prop:clt_llt_h=0-beta>1}}\label{sec:proof_magnetization}
Recall that we work under the assumptions $h=0$ and $\beta>1$, which imply ${\tt m}:={\tt m}(\beta,0)\in (0,1)$. For simplicity we also assume throughout this proof that $N=2n$ is even. The case of odd $N$ can be treated similarly. For every $-n\leq \ell\leq n$, we have
\begin{equation}\label{eq:111}
\mu_N(\{\sigma:Nm_N(\sigma)=2\ell\})= \frac{|\{\sigma:Nm_N(\sigma)=2\ell\}|}{Z_{N}(\beta)}\exp\left(\frac{\beta}{2N}(2\ell)^2\right)=
\frac{1}{Z_{2n}(\beta)}\binom{2n}{n+\ell}\exp(\beta\ell^2/n).
\end{equation}
For further use we record the asymptotic formula for the partition function: if $\beta>1$, then
\begin{equation}\label{eq:partition_function_asymp_beta>1}
Z_N(\beta)~=~(1+o(1))\frac{2{\tt v}_{\beta,0}}{\sqrt{1-{\tt m}^2}}2^{N}\eee^{-N(\mathcal{I}({\tt m})-\beta {\tt m}^2/2)},\quad N\to\infty,
\end{equation}
where $\mathcal{I}(x):=\frac{1}{2}((1+x)\log(1+x)+(1-x)\log(1-x))$ for $x\in [-1,1]$, which can be derived
from Th\'eor\`eme B in~\cite{BB90} by specializing to the Curie-Weiss model or from (3.19) in~\cite{Kabluchko+Loewe+Schubert:2022} by
choosing $p=1$; see also~\cite[Theorem 1.3]{Shamis+Zeitouni:2018}.
For every fixed $n\in\N$, the function
\begin{equation}\label{eq:pmf_func}
\{-n,\ldots,n\}\ni\ell\longmapsto \binom{2n}{n+\ell}\exp(\beta\ell^2/n)
\end{equation}
is even and attains two local maxima at $\ell\approx n{\tt m}$ and $\ell\approx -n{\tt m}$. This can be checked by calculating the ratio of its two consecutive values. Using the aforementioned symmetry and the standard tail estimate for the normal density, we see that~\eqref{eq:clt_llt_h=0-beta>1} is equivalent to
$$
\lim_{n\to\infty}\sqrt{N}\sup_{\ell\in \mathbb{Z}_{\geq 0}}\left|\mu_N\left(\left\{\sigma: Nm_N(\sigma)=2\ell\right\}\right)-\frac{1}{2}\varphi\left(\ell;\frac{N{\tt m}}{2},\frac{N{\tt v}^2_{\beta,0}}{4}\right)\right|=0,
$$
or, by~\eqref{eq:111}, to
\begin{equation}\label{eq:clt_llt_h=0-beta>1_proof}
\lim_{n\to\infty}\sqrt{n}\sup_{\ell\in \mathbb{Z}_{\geq 0}}\left|\frac{1}{Z_{2n}(\beta)}\binom{2n}{n+\ell}\exp(\beta\ell^2/n)-\frac{1}{2}\varphi\left(\ell;n{\tt m},\frac{n{\tt v}^2_{\beta,0}}{2}\right)\right|=0.
\end{equation}
Put $\mathcal{C}_n:=\{j\in\mathbb{Z}_{\geq 0}:j\in [{\tt m}n-n^{7/12},{\tt m}n+n^{7/12}]\}$. By Stirling's formula, uniformly in $\ell\in \mathcal{C}_n$, it holds
$$
2^{-2n}\binom{2n}{n+\ell}=(1+o(1))\sqrt{\frac{1}{\pi n}}\frac{1}{\sqrt{1-{\tt m}^2}}\eee^{-2n\mathcal{I}(\ell/n)}.
$$
Combining this with~\eqref{eq:partition_function_asymp_beta>1} we deduce that, uniformly in $\ell\in \mathcal{C}_n$,
\begin{equation*}
\frac{1}{Z_{2n}(\beta)}\binom{2n}{n+\ell}\exp(\beta\ell^2/n)=\frac{1+o(1)}{2{\tt v}_{\beta,0}\sqrt{\pi n}}\exp\left(-2n\left(\mathcal{I}(\ell/n)-\mathcal{I}({\tt m})+\frac{\beta{\tt m}^2}{2}-\frac{\beta \ell^2}{2n^2}\right)\right).
\end{equation*}
By the definition of ${\tt m}$ the first derivative of $t\mapsto \mathcal{I}(t)-\beta t^2/2$ vanishes at $t={\tt m}$ and the second derivative at $t={\tt m}$ is equal to ${\tt v}^{-2}_{\beta,0}$. Thus, plugging the Taylor expansion
$$
\mathcal{I}(\ell/n)=\mathcal{I}({\tt m})+\mathcal{I}'({\tt m})\frac{\ell-{\tt m}n}{n}+\frac{\mathcal{I}''({\tt m})}{2}\left(\frac{\ell-{\tt m}n}{n}\right)^2+ O\left(\left(\frac{\ell-{\tt m}n}{n}\right)^3\right),
$$
we obtain
$$
\frac{1}{Z_{2n}(\beta)}\binom{2n}{n+\ell}\exp(\beta\ell^2/n)~=~\frac{(1+o(1))}{2{\tt v}_{\beta,0}\sqrt{\pi n}}\exp\left(-\frac{(\ell -{\tt m}n)^2}{{\tt v}^2_{\beta,0}n}\right)=(1+o(1))\frac{1}{2}\varphi\left(\ell;n{\tt m},\frac{n{\tt v}^2_{\beta,0}}{2}\right),
$$
which is again uniform over $\ell\in\mathcal{C}_n$. This yields that~\eqref{eq:clt_llt_h=0-beta>1_proof} holds with $\sup_{\ell\in\mathbb{Z}_{\geq 0}}$ replaced by $\sup_{\ell\in\mathbb{Z}_{\geq 0},\ell\in \mathcal{C}_n}$.

To check that the indices outside $\mathcal{C}_n$ are negligible we use the same reasoning as in~\cite{Kabluchko+Loewe+Schubert:2022}; see pp.~548-549 therein. According to the estimate~(3.8) in~\cite{Kabluchko+Loewe+Schubert:2022}
$$
2^{-2n}\binom{2n}{2n+\ell}\leq \exp(-2n\mathcal{I}(\ell/n)),\quad -n\leq \ell \leq n.
$$
Therefore, for some absolute constant $C_1>0$ all $n\in\N$ and $|\ell|\leq n$,
$$
\frac{1}{Z_{2n}(\beta)}\binom{2n}{n+\ell}\exp(\beta\ell^2/n)\leq C_1\exp\left(-2n\left(\mathcal{I}(\ell/n)-\mathcal{I}({\tt m})+\frac{\beta{\tt m}^2}{2}-\frac{\beta \ell^2}{2n^2}\right)\right).
$$
The function $t\mapsto \mathcal{I}(t)-\beta t^2/2$ attains a local minimum on the positive half-line at ${\tt m}$. Therefore, for $|\ell-{\tt m}n|\geq n^{7/12}$, $|\ell|\leq n$, and some $C_2>0$
$$
\mathcal{I}(\ell/n)-\mathcal{I}({\tt m})+\frac{\beta{\tt m}^2}{2}-\frac{\beta \ell^2}{2n^2}\geq C_2\left(\frac{(\ell-{\tt m}n)^2}{n^2}\right)\geq C_2 n^{-5/6}.
$$
Thus, for $|\ell-{\tt m}n|\geq n^{7/12}$ and $|\ell|\leq n$,
$$
\frac{1}{Z_{2n}(\beta)}\binom{2n}{n+\ell}\exp(\beta\ell^2/n)\leq C_1\eee^{-2C_2 n^{1/6}}.
$$
Combing this with a standard tail estimate for the normal distribution shows that~\eqref{eq:clt_llt_h=0-beta>1_proof} holds with $\sup_{\ell\in\mathbb{Z}_{\geq 0}}$ replaced by $\sup_{\ell\in\mathbb{Z}_{\geq 0},\ell\notin \mathcal{C}_n}$. The proof of Proposition~\ref{prop:clt_llt_h=0-beta>1} is complete.

\section{Auxiliary results}\label{sec:appendix}
\begin{proposition}\label{prop:diff_variance}
Assume that two sequences of integer-valued random variables $(Q^{(1)}_N)_{N\in\mathbb{N}}$ and $(Q^{(2)}_N)_{N\in\mathbb{N}}$ satisfy the local limit theorems
\begin{equation}\label{eq:lclt_assumption1}
\lim_{N\to\infty}\sqrt{N}\sup_{\ell\in\mathbb{Z}}\left|\mathbb{P}(Q^{(i)}_N=\ell)-\varphi(\ell;N {\tt m},N{\tt v}_i^2)\right|=0,\quad i=1,2,
\end{equation}
with the same mean ${\tt m}$ and arbitrary variances ${\tt v}_1^2$ and ${\tt v}_2^2$. Then
$$
\lim_{N\to\infty}d_{TV}(\mathcal{L}(Q^{(1)}_N),\mathcal{L}(Q^{(2)}_N))=D({\tt v}^2_1,{\tt v}^2_2),
$$
 where the right-hand side was defined in~\eqref{eq:def_D_dist_normal_distr}.
In particular, if ${\tt v}_1^2={\tt v}_2^2$, then 
$$
\lim_{N\to\infty}d_{TV}(\mathcal{L}(Q^{(1)}_N),\mathcal{L}(Q^{(2)}_N))=0.
$$
\end{proposition}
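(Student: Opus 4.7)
The plan is to apply formula~\eqref{eq:dtv_sum}, which identifies the total variation distance with half the $\ell^1$-distance between the probability mass functions, and then to reduce this discrete sum to the continuous integral defining $D({\tt v}_1^2,{\tt v}_2^2)$ via a Riemann-sum argument driven by the hypothesis~\eqref{eq:lclt_assumption1}. For a parameter $A>0$ to be sent to $+\infty$ at the end, I would decompose
$$
\sum_{\ell\in\Z}\big|\P(Q_N^{(1)}=\ell)-\P(Q_N^{(2)}=\ell)\big|=\Sigma_N^{\rm bulk}(A)+\Sigma_N^{\rm tail}(A),
$$
where the bulk runs over indices $\ell$ with $|\ell-N{\tt m}|\leq A\sqrt N$ and the tail over its complement in $\Z$.

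On the bulk, assumption~\eqref{eq:lclt_assumption1} ensures that each of the $O(A\sqrt N)$ summands differs from $|\varphi(\ell;N{\tt m},N{\tt v}_1^2)-\varphi(\ell;N{\tt m},N{\tt v}_2^2)|$ by $o(1/\sqrt N)$ uniformly in $\ell$, so replacing probabilities by Gaussian densities introduces an error of order $o(1)$ in $\Sigma_N^{\rm bulk}(A)$. The scaling identity $\varphi(\ell;N{\tt m},N{\tt v}_i^2)=N^{-1/2}\varphi((\ell-N{\tt m})/\sqrt N;0,{\tt v}_i^2)$ then recasts the bulk sum as a Riemann sum with mesh $1/\sqrt N$ for the continuous, integrable function $g(t):=|\varphi(t;0,{\tt v}_1^2)-\varphi(t;0,{\tt v}_2^2)|$, so $\Sigma_N^{\rm bulk}(A)\to\int_{-A}^A g(t){\rm d}t$ as $N\to\infty$, for each fixed $A$.

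For the tail, the triangle inequality gives $\Sigma_N^{\rm tail}(A)\leq\sum_{i=1,2}\P(|Q_N^{(i)}-N{\tt m}|>A\sqrt N)$. To bound each tail probability I would write $\P(|Q_N^{(i)}-N{\tt m}|>A\sqrt N)=1-\sum_{|\ell-N{\tt m}|\leq A\sqrt N}\P(Q_N^{(i)}=\ell)$ and apply the same LLT-plus-Riemann-sum approximation (now to a single pmf) to conclude that this quantity converges as $N\to\infty$ to $1-\int_{-A}^A\varphi(t;0,{\tt v}_i^2){\rm d}t$, which vanishes as $A\to\infty$. Sending first $N\to\infty$ and then $A\to\infty$ in the full decomposition yields $\lim_{N\to\infty}d_{TV}(\mathcal{L}(Q_N^{(1)}),\mathcal{L}(Q_N^{(2)}))=\tfrac12\int_\R g(t){\rm d}t=D({\tt v}_1^2,{\tt v}_2^2)$.

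The main obstacle will be this tail control: the hypothesis~\eqref{eq:lclt_assumption1} is only a uniform pointwise bound and is not a priori summable over $\Z$, so one cannot simply sum the $o(1/\sqrt N)$ error term. The remedy is to bootstrap the tail-mass bounds from the normalization $\sum_\ell\P(Q_N^{(i)}=\ell)=1$ using the bulk estimate that has already been controlled, which replaces missing moment information by the Gaussian mass outside $[-A,A]$. The degenerate case ${\tt v}_1^2={\tt v}_2^2$ is then immediate, since there $g\equiv 0$ and $\int_\R g=0$.
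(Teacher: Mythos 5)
Your proposal is correct and follows essentially the same route as the paper's proof: the identity~\eqref{eq:dtv_sum}, a split of the $\ell^1$-sum at distance $M\sqrt N$ from $N{\tt m}$, replacement of the pmfs by Gaussian densities on the bulk (with total error of order $M\varepsilon$), a Riemann-sum passage to $\int_{-M}^{M}|\varphi(t;0,{\tt v}_1^2)-\varphi(t;0,{\tt v}_2^2)|\,{\rm d}t$, and Gaussian tail bounds, sending $N\to\infty$ before $M\to\infty$. The only (harmless) difference is that you control the tail probabilities $\mathbb{P}(|Q_N^{(i)}-N{\tt m}|>M\sqrt N)$ explicitly by complementation against the bulk mass, whereas the paper bounds them by the corresponding Gaussian tail integrals directly, which amounts to the same use of the local limit theorem.
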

\begin{proof}
Without loss of generality assume that ${\tt m}=0$. Fix $\varepsilon>0$ and $M>0$. Then, there exists $N_0(\varepsilon)$ such that
$$
\left|\mathbb{P}(Q^{(i)}_N=\ell)-\varphi(\ell;0,N{\tt v}_i^2)\right|\leq \frac{\varepsilon}{\sqrt{N}},\quad i=1,2,\quad N\geq N_0(\varepsilon),\quad -M\sqrt{N}\leq \ell \leq M\sqrt{N}.
$$
Using~\eqref{eq:dtv_sum} we infer
\begin{multline*}
\left|2d_{TV}(\mathcal{L}(Q^{(1)}_N),\mathcal{L}(Q^{(2)}_N))-\sum_{\ell:\,|\ell|\leq M\sqrt{N}}\left|\varphi(\ell;0,N{\tt v}_1^2)-\varphi(\ell;0,N{\tt v}_2^2)\right|\right|\\
\leq \mathbb{P}(|Q^{(1)}_N|> M\sqrt{N})+\mathbb{P}(|Q^{(2)}_N|> M\sqrt{N})+4M\varepsilon+\sum_{\ell:\,|\ell|> M\sqrt{N}}\varphi(\ell;0,N{\tt v}_1^2)+\sum_{\ell:\,|\ell|> M\sqrt{N}}\varphi(\ell;0,N{\tt v}_2^2).
\end{multline*}
Note that
\begin{align}\label{eq:riemann}
\lim_{N\to\infty}\sum_{\ell:\,|\ell|\leq M\sqrt{N}}\left|\varphi(\ell;0,N{\tt v}_1^2)-\varphi(\ell;0,N{\tt v}_2^2)\right|&=\lim_{N\to\infty}\frac{1}{\sqrt{N}}\sum_{\ell:\,|\ell|\leq M\sqrt{N}}\left|\varphi(N^{-1/2}\ell;0,{\tt v}_1^2)-\varphi(N^{-1/2}\ell;0,{\tt v}_2^2)\right|\nonumber	\\
&=\int_{-M}^{M}\left|\varphi(t;0,{\tt v}_1^2)-\varphi(t;0,{\tt v}_2^2)\right|{\rm d}t,
\end{align}
since the sum on the left-hand side is a Riemann sum of a Riemann integrable function on $[-M,M]$. Thus,
\begin{align*}
&\hspace{-1cm}\limsup_{N\to\infty}\left|2d_{TV}(\mathcal{L}(Q^{(1)}_n),\mathcal{L}(Q^{(2)}_n))-\int_{-M}^{M}\left|\varphi(t;0,{\tt v}_1^2)-\varphi(t;0,{\tt v}_2^2)\right|{\rm d}t\right|\\
&\leq \int_{|t|>M}\varphi(t;0,{\tt v}_1^2){\rm d}t+\int_{|t|>M}\varphi(t;0,{\tt v}_2^2){\rm d}t+4M\varepsilon\\
&\hspace{1cm}+\limsup_{N\to\infty}N^{-1/2}\sum_{\ell:\,|\ell|> M\sqrt{N}}\varphi(N^{-1/2}\ell;0,N{\tt v}_1^2)+\limsup_{N\to\infty}N^{-1/2}\sum_{\ell:\,|\ell|> M\sqrt{N}}\varphi(N^{-1/2}\ell;0,{\tt v}_2^2)\\
&= 2\int_{|t|>M}\varphi(t;0,{\tt v}_1^2){\rm d}t+2\int_{|t|>M}\varphi(t;0,{\tt v}_2^2){\rm d}t+4M\varepsilon,
\end{align*}
where for the last equality we used that $t\mapsto \varphi(t;0,v_i^2)$, $i=1,2$, are directly Riemann integrable on $\mathbb{R}$. Sending $\varepsilon\to 0$ and then $M\to\infty$ completes the proof.
\end{proof}

For the mixtures of normal densities we have the following generalization of Proposition~\ref{prop:diff_variance}.

\begin{proposition}\label{prop:diff_variance_mixture}
If we replace~\eqref{eq:lclt_assumption1} by
\begin{equation}\label{eq:lclt_assumption1_mixture}
\lim_{N\to\infty}\sqrt{N}\sup_{\ell\in\mathbb{Z}}\left|\mathbb{P}(Q^{(i)}_N=\ell)-\sum_{j=1}^{K}p_j\varphi(\ell;N {\tt m}_j,N{\tt v}_{i,j}^2)\right|=0,\quad i=1,2,
\end{equation}
for some $K\in\mathbb{N}$, a collection of positive weights $p_1,\ldots,p_K$ satisfying $\sum_{j=1}^{K}p_j=1$, ${\tt m}_j\in \mathbb{R}$ such that ${\tt m}_i\neq {\tt m}_j$, $i\neq j$, and ${\tt v}^2_{i,j}>0$, $j=1,\ldots,K$, then
$$
\lim_{N\to\infty}d_{TV}(\mathcal{L}(Q^{(1)}_N),\mathcal{L}(Q^{(2)}_N))=\sum_{j=1}^{K}p_j D({\tt v}^2_{1,j},{\tt v}^2_{2,j}).
$$
\end{proposition}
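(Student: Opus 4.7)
The plan is to mimic the strategy used in Proposition~\ref{prop:diff_variance}, but to localize around each of the $K$ peaks $N{\tt m}_j$ separately. Since the ${\tt m}_j$ are distinct and the Gaussian components have standard deviation of order $\sqrt N$, the $K$ bumps in the approximating mixture become essentially disjoint on the scale $\sqrt N$, so the total variation distance decouples into a weighted sum of $K$ copies of the single-bump case.

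Concretely, I start from the identity~\eqref{eq:dtv_sum}:
\begin{equation*}
2 d_{TV}(\mathcal{L}(Q^{(1)}_N),\mathcal{L}(Q^{(2)}_N))=\sum_{\ell\in\mathbb Z}\bigl|\mathbb{P}(Q^{(1)}_N=\ell)-\mathbb{P}(Q^{(2)}_N=\ell)\bigr|.
\end{equation*}
Fix a large $M>0$ and set $I_j(M,N):=\{\ell\in\mathbb Z:|\ell-N{\tt m}_j|\leq M\sqrt N\}$. For $N$ large enough (depending on $M$ and on $\min_{i\neq j}|{\tt m}_i-{\tt m}_j|$), the sets $I_1(M,N),\ldots,I_K(M,N)$ are pairwise disjoint. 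I split the sum over $\mathbb Z$ into the $K$ pieces $\sum_{\ell\in I_j(M,N)}$ plus a remainder over $R(M,N):=\mathbb Z\setminus \bigcup_j I_j(M,N)$.

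Next I handle the piece on $I_j(M,N)$. For $\ell\in I_j(M,N)$ and $i'\neq j$, the term $\varphi(\ell;N{\tt m}_{i'},N{\tt v}^2_{\cdot,i'})$ is bounded by a Gaussian tail at distance of order $\sqrt N$, so $p_{i'}\varphi(\ell;N{\tt m}_{i'},N{\tt v}^2_{r,i'})=o(N^{-1/2})$ uniformly in such $\ell$, and its sum over $I_j(M,N)$ is $o(1)$ as $N\to\infty$. Combined with assumption~\eqref{eq:lclt_assumption1_mixture}, this gives, for $r=1,2$,
\begin{equation*}
\sup_{\ell\in I_j(M,N)}\Bigl|\mathbb{P}(Q^{(r)}_N=\ell)-p_j\varphi(\ell;N{\tt m}_j,N{\tt v}_{r,j}^2)\Bigr|=o(N^{-1/2}).
\end{equation*}
Summing the two triangle inequalities over $\ell\in I_j(M,N)$ (a set of cardinality $\leq 2M\sqrt N+1$) and applying the Riemann-sum argument used in~\eqref{eq:riemann} with the substitution $t=(\ell-N{\tt m}_j)/\sqrt N$, I obtain
\begin{equation*}
\lim_{N\to\infty}\sum_{\ell\in I_j(M,N)}\bigl|\mathbb{P}(Q^{(1)}_N=\ell)-\mathbb{P}(Q^{(2)}_N=\ell)\bigr|
=p_j\int_{-M}^{M}\bigl|\varphi(t;0,{\tt v}_{1,j}^2)-\varphi(t;0,{\tt v}_{2,j}^2)\bigr|\,{\rm d}t.
\end{equation*}

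Finally, I need to control the remainder over $R(M,N)$. By assumption~\eqref{eq:lclt_assumption1_mixture}, for each $r\in\{1,2\}$ we can bound $\sum_{\ell\in R(M,N)}\mathbb P(Q^{(r)}_N=\ell)$ by $\sum_j p_j\sum_{\ell:|\ell-N{\tt m}_j|>M\sqrt N}\varphi(\ell;N{\tt m}_j,N{\tt v}_{r,j}^2)+o(1)$, which via the same Riemann-sum trick converges to $\sum_j p_j\int_{|t|>M}\varphi(t;0,{\tt v}_{r,j}^2)\,{\rm d}t$. Summing the $K$ in-bump limits, letting $N\to\infty$ and then $M\to\infty$, the inner integrals extend to all of $\mathbb R$ and the remainder tails vanish, yielding
\begin{equation*}
\lim_{N\to\infty}2 d_{TV}(\mathcal{L}(Q^{(1)}_N),\mathcal{L}(Q^{(2)}_N))
=\sum_{j=1}^{K}p_j\int_{\mathbb R}\bigl|\varphi(t;0,{\tt v}_{1,j}^2)-\varphi(t;0,{\tt v}_{2,j}^2)\bigr|\,{\rm d}t
=2\sum_{j=1}^{K}p_j D({\tt v}^2_{1,j},{\tt v}^2_{2,j}).
\end{equation*}
The main technical point is the decoupling step: verifying that the cross-terms $p_{i'}\varphi(\ell;N{\tt m}_{i'},N{\tt v}^2_{r,i'})$ with $i'\neq j$ contribute negligibly on $I_j(M,N)$, which is quantitatively immediate from the fact that $|\ell-N{\tt m}_{i'}|\geq c N$ for $\ell\in I_j(M,N)$ with $c=\tfrac12\min_{i\neq j}|{\tt m}_i-{\tt m}_j|$ and $N$ large. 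Everything else is essentially a bookkeeping adaptation of the proof of Proposition~\ref{prop:diff_variance}.
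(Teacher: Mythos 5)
Your argument is correct and follows essentially the same route as the paper's own (sketched) proof: split $\mathbb{Z}$ into windows of width $O(\sqrt N)$ around each $N{\tt m}_j$ plus a remainder, use the local limit theorem together with Gaussian tail estimates and the Riemann-sum argument of~\eqref{eq:riemann} inside each window, and send $M\to\infty$ at the end. The only step to phrase carefully is the remainder bound: since $R(M,N)$ is infinite you cannot sum the pointwise $o(N^{-1/2})$ LLT error over it directly, but the bound you claim follows at once from writing $\sum_{\ell\in R(M,N)}\mathbb{P}(Q^{(r)}_N=\ell)=1-\sum_{j=1}^{K}\sum_{\ell\in I_j(M,N)}\mathbb{P}(Q^{(r)}_N=\ell)$ and evaluating the finite window sums via the LLT, which is the same total-probability device the paper uses for its set $A_{K+1}(N)$.
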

\begin{proof}
We shall sketch the proof omitting details which are similar to those used in the proof of Proposition~\ref{prop:diff_variance}. Fix large positive constants $M_1,\ldots,M_K$. Split the range of summation on the right-hand side of equality
$$
d_{TV}(\mathcal{L}(Q^{(1)}_N),\mathcal{L}(Q^{(2)}_N))=\frac{1}{2}\sum_{\ell\in\mathbb{Z}}\left|\mathbb{P}(Q^{(1)}_N=\ell)-\mathbb{P}(Q^{(2)}_N=\ell)\right|
$$
into $K+1$ sets
$$
A_j(N):=\{\ell\in\mathbb{Z}: |\ell-{\tt m}_j N|\leq M_j\sqrt{N}\},\quad j=1,\ldots,K,\quad A_{K+1}(N):=\mathbb{Z}\setminus \left(\cup_{j=1}^{K}A_j(N)\right).
$$
These sets are pairwise disjoint for all large enough $N$, since ${\tt m}_j$'s are pairwise different. In view of~\eqref{eq:lclt_assumption1_mixture} and by the standard tail estimate for the normal law, for every $j=1,\ldots,K$,
$$
\frac{1}{2}\sum_{\ell\in A_j(N)}\left|\mathbb{P}(Q^{(1)}_N=\ell)-\mathbb{P}(Q^{(2)}_N=\ell)\right|=\frac{p_j}{2}\sum_{\ell\in A_j(N)}\left|\varphi(\ell;N {\tt m}_j,N{\tt v}_{1,j}^2)-\varphi(\ell;N {\tt m}_j,N{\tt v}_{2,j}^2)\right|+r_j(N,M_j),
$$
where the remainders satisfy $\lim_{M_j\to\infty}\limsup_{N\to\infty}|r_j(N,M_j)|=0$, $j=1,\ldots,K$. Furthermore,
\begin{multline*}
\lim_{\min_j M_j\to\infty}\limsup_{N\to\infty}\sum_{\ell\in A_{K+1}(N)}\left|\mathbb{P}(Q^{(1)}_N=\ell)-\mathbb{P}(Q^{(2)}_N=\ell)\right|\\
\leq \lim_{\min_j M_j\to\infty}\limsup_{N\to\infty}\left(\mathbb{P}(Q^{(1)}_N\in A_{K+1}(N))+\mathbb{P}(Q^{(2)}_N\in A_{K+1}(N))\right)=0.
\end{multline*}
It remains to note that~\eqref{eq:riemann} yields
$$
\lim_{M_j\to\infty}\lim_{N\to\infty}\frac{1}{2}\sum_{\ell\in A_j(N)}\left|\varphi(\ell;N {\tt m}_j,N{\tt v}_{1,j}^2)-\varphi(\ell;N {\tt m}_j,N{\tt v}_{2,j}^2)\right|=D({\tt v}_{1,j}^2,{\tt v}_{2,j}^2),\quad j=1,\ldots,K.
$$
\end{proof}


\end{document}